\theoremstyle{plain}
\newtheorem{theorem}{Theorem}[section]
\newtheorem{lemma}[theorem]{Lemma}
\theoremstyle{definition}
\newtheorem{example}{Example}
\newtheorem{Remark}[theorem]{Remark}
\newcommand\bx{\boldsymbol{x}}
\newcommand\bbeta{{\boldsymbol{\beta}}}
\newcommand\calP{{\mathcal{P}}}
\newcommand\calF{\mathcal{F}}
\newcommand\frakh{\mathfrak{h}}
\journal{Journal of Computational and Applied Mathematics}
\begin{document}

\begin{frontmatter}



\title{A nodally bound-preserving finite element method for
	time-dependent convection-diffusion equations}

 \author[label1]{Abdolreza  Amiri} \author[label1]{Gabriel R. Barrenechea}
 \author[label2]{Tristan Pryer}
 \affiliation[label1]{organization={ Department of Mathematics and Statistics, University of Strathclyde},
	             addressline={26 Richmond Street},
	             postcode={G1 1XH},
	             city={Glasgow},
	             country={Scotland}}

 \affiliation[label2]{organization={Department of Mathematical Sciences},
             addressline={University of Bath },
             city={Claverton down, Bath},
             postcode={BA2 7AY},
             country={UK}}

\author{} 


\begin{abstract}
This paper presents a new method to approximate the time-dependent
convection-diffusion equations using conforming finite
element methods, ensuring that the discrete solution respects the
physical bounds imposed by the differential equation.  The method is
built by defining, at each time step, a convex set of admissible
finite element functions (that is, the ones that satisfy the global
bounds at their degrees of freedom) and seeks for a discrete solution
in this admissible set.  A family of $\theta$-schemes is used as time
integrators, and well-posedness of the discrete schemes is proven for
the whole family, but stability and optimal-order error estimates are
proven for the implicit Euler scheme. Nevertheless, our numerical
experiments show that the method also provides stable and
optimally-convergent solutions when the Crank-Nicolson method is used.
\end{abstract}

\begin{keyword}
	Time-dependent convection-diffusion equation; stabilised finite-element approximation; positivity preservation; variational inequality.



\end{keyword}

\end{frontmatter}
\section{Introduction}
\label{sec:intro}

In many applications the simulation of time-dependent convection-diffusion equations is required. The solution of this equation provides a theory for the transport-controlled reaction rate of two molecules in a shear flow, also it models the processes of the chemical
reaction in a flow field. Such simulations are obtained by a nonlinear system of time dependent convection-diffusion equations for the concentrations of the reactants and the products. An inaccuracy in one of the equations in the system of nonlinear equations affects all the concentrations. Usually in applications the convection field is dominant over the diffusion by several orders of magnitude,
thus making the numerical approximation of such problems very unstable and prone to spurious oscillations. For example,  it is well-known that the standard Galerkin finite element method
for space discretisation is not appropriate for convection-dominated problems.  The preferred way to deal with this fact over the last few decades has been to replace
the standard Galerkin finite element method by a {\sl stabilised} finite element method, that is, a method that adds an extra term to the formulation in such a way that
stability is enhanced.  The first stabilised methods for convection-diffusion problems, e.g., the SUPG method \cite{MR0679322} and the Galerkin Least-Squares method \cite{HUGHES1989173}
were of {\sl residual} type, i.e., a weighted term based on the residual was added to the formulation to add numerical diffusion. In the time dependent context the residual character
of the SUPG method requires  the time derivative of the solution to be included in the stabilisation term, thus creating a somewhat artificial coupling between the time step and
the stabilisation parameter. This coupling has been extensively studied in the literature, see, e.g.,\cite{BURMAN20101114} where stability is analysed for different choices of time discretisation. In fact, if a standard stability analysis is performed, the addition of SUPG stabilisation suggests a loss of stability if the mesh size and
time step are not correctly balanced (although some numerical evidence contradicting this claim can be found, e.g., in \cite{Bochev04}).  Due to this, in later years the
possibilty of using non-residual, symmetric stabilisations for the time-dependent convection-diffusion has been studied more intensively.  Examples of symmetric stabilisations
include the subgrid viscosity method \cite{MR1736900}, the orthogonal subscale method  \cite{MR2068903}, and the continuous interior penalty (CIP) method  \cite{MR2121360}. 
In the work \cite{BF09} a detailed analysis on the stability analysis and advantages of using symmetric stabilisation can be found.

Now, none of the methods mentioned in the previous paragraph can be proven to satisfy the so-called Discrete Maximum Principle (DMP), or even the weaker property of
respecting the bounds satisfied by the continuous problem. In fact, this property has been derived for only a selection of methods,  usually under geometric conditions
on the computational mesh.  For example, in the pioneering work \cite{CR73} it is shown that for the finite element discretisation of a diffusion equation in two space dimensions using piecewise
linear elements a sufficient condition for the satisfaction of the DMP is that the mesh satisfies
the Delaunay condition. This condition was then proven necessary for the matrix to have the right sign pattern in the later work \cite{XZ99}.  The situation is more
dramatic for convection-diffusion equations, where the mesh needs to be acute, and sufficiently fine, for the matrix to have the right sign pattern, and thus the
finite element method to satisfy the DMP.  For convection-diffusion equations, the more straightforward remedy is to add isotropic artificial diffusion to the problem, in such
a way that the dominating matrix in the linear system is the diffusion one.  This strategy does lead to a method that satisfies the DMP under less stringent conditions on
the fineness of the mesh. Unfortunately, the consistency error introduced by adding artificial diffusion is too large, and the results obtained by this method tend to
smear the layers excessively (see, e.g., \cite[Section~5.2]{BJK23} for a discussion and further references).  In the quest of adding diffusion so the DMP is satisfied
and layers are not excessively smeared, the idea of {\sl localising} the diffusion has been proposed as a way to make the problem locally diffusion-dominated in areas near
extrema and layers, thus preventing spurious oscillations and local violations of the discrete maximum principle. This leads
to nonlinear (shock-capturing related) discretisations.  In the last few decades
numerous nonlinear discretisations for the steady-state convection-diffusion equation have been proposed, e.g., 
\cite{MH85,XZ99,BE05,Kuz07,BBK17-NumMath}, and \cite{BJK23} for a review.

In many instances, the numerical solution does not need to be completely free of local spurious oscillations; it only needs to satisfy  global bounds to lead to a stable discretisation.  
One way to achieve a solution that respects this bound is to simply cut the solution at the bound, like it is done in, e.g., the cut-off finite element method \cite{LHV13}. However, 
this process results in a solution that is not an element of the finite element space, and thus analysing its stability and convergence is, in general, a challenging affair.
In the recent works \cite{BGPV23,ABT} an alternative path is followed. It uses the 
following property: There is a correspondence between the imposition of bounds on the numerical solution and searching for the numerical solution on a convex subset of the finite element space of the steady state convection-diffusion equation consisting of the discrete functions satisfying those bounds at their nodal values.
So, in those works a finite element method that seeks {\it directly} for a finite element solution that satisfies the global bounds is proposed, and analysed in different contexts.
In this work we build on those previous works and extend that method to the 
time dependent convection-diffusion equation.  The basic idea can be summarised as follows: for each time step we 
define a set $V_{\mathcal{P}}^{+}$, of {\it admissible} finite element functions as those satisfying the global bound at {\it at their
	degrees of freedom} (nodal values in the case of Lagrangian elements); then, introduce an algebraic projection onto the admissible
set,  denote by $u_h^+$ the projection of $u_h^{}$ onto $V_\calP^+$
and write a finite element problem for the projected object. To eliminate the non-trivial kernel (and so to remove the singularity) generated by this process, a stabilisation term is incorporated into the discretised equations at each time step to remove this kernel (that is, the part of the solution which has been removed).  In our numerical experience we have observed that more stable
results are obtained if we add a linear stabilised term to the problem for $u_h^+$, and thus we present the method including CIP stabilisation.

The remainder of the paper is organised as follows: In Section
\ref{sec:pre} we introduce the notation, the model problem, and all
the preliminary material for the setup of the method.  In
Section~\ref{Sec:FEM} we present the finite element method and show
its well-posedness. The stability and error analysis is carried out in
Section~\ref{Sec:Error}, and in Section \ref{sec:numerics} we test the
performance of the method via different numerical experiments. Finally, some conclusions are drawn in Section \ref{sec:conc}.

\section{General setting and the model problem}
\label{sec:pre}	
We will adopt standard notations for Sobolev spaces  in line with,
e.g., \cite{EG21-I}. For $D\subseteq\mathbb{R}^{d}$, $d=1,2,3$, we denote by
$\|\cdot\|_{0,p,D} $ the $L^{p}(D)$-norm; when $p=2$ the subscript $p$
will be omitted and we only write $\|\cdot\|_{0,D} $. The inner product in $L^{p}(D)$ is also denoted by $(\cdot,\cdot)_{D}$. In addition, for
$s\geq 0$, $p\in [1,\infty]$, we denote by $\| \cdot \|_{s,p,D}$ ($|
\cdot |_{s,p,D}$) the norm (seminorm) in $W^{s,p}(D)$; when $p=2$, we
define $H^{s}(D)=W^{s,2}(D)$, and again omit the subscript $p$ and only write $\|\cdot \|_{s,D}$
($| \cdot |_{s,D}$).  The following space will also be used repeatedly within the text
\begin{align}
	H^{1}_{0}(D)=\left\{ v \in H^1(D) : v = 0 \; {\rm on} \; \partial D \right\}. \label{space}
\end{align}

For $1\leq p\leq +\infty$, $L^{p}((0,T);W^{s,p}(D))$ is the space defined by
\begin{equation*}
	L^{p}((0,T);W^{s,p}(D))=\left\{u(t,\cdot)\in W^{s,p}(D)~~\text{for almost all $t \in [0,T]$}  : \| u \|_{s,p,D}\in L^{p}(0,T)\right\}\,,
\end{equation*}
and it is a Banach space for the norm
\begin{equation*}
	\begin{split}	
		\|	u \|_{L^{p}((0,T);W^{s,p}(D))}	=	\left\{ \begin{array}{ll} \left(\int_{0}^{T}\| u \|_{s,p,D}^{p} \textrm{d}t\right)^{\frac{1}{p}} \hspace{2.3cm}{\rm if}  \hspace{0.2cm}1\leq p < \infty,\\
			\\
			{\rm ess} \hspace{0.05cm}\sup_{t\in (0,T)}\| u \|_{s,p,D}\hspace{1.9cm}{\rm if}\hspace{0.3cm} p=\infty.
		\end{array} \right.
	\end{split}		
\end{equation*}
\subsection{The model problem} 

Let $\Omega$ be an open bounded Lipschitz domain in $\mathbb{R}^{d}$ ($d=2,3$) with polyhedral boundary $\partial \Omega$, and $T>0$.
For a given $f\in L^{2}((0,T);L^{2}(\Omega))$, we consider the following convection-diffusion problem:
\begin{equation}
	\begin{split}
		\left\{
		\begin{aligned}
			\partial_{t}u - \varepsilon \,\Delta u + \bbeta \cdot \nabla u + \mu u &= f  &&\text{in } (0,T] \times \Omega, \\
			u(\bx, t) &= 0  &&\text{on } (0,T] \times \partial \Omega, \\
			u(\cdot, 0) &= u^{0} &&\text{in } \Omega,
		\end{aligned}
		\right.\label{CDR}
	\end{split}
\end{equation}
where $\varepsilon\in \mathbb{R}^{+}$, $\bbeta=( \beta_{i})_{i=1}^{d}\in L^{\infty}((0,T);W^{1,\infty}(\Omega))^{d}$,  and $\mu\in \mathbb{R}^{+}_{0}$, respectively, are the diffusion coefficient, the convective field, and the reaction coefficient. 
We will assume that $ {\rm div} \bbeta=0$ in $\Omega\times[0,T]$.

The standard weak formulation of  \eqref{CDR} reads as follows: Find $u\in L^\infty((0,T),H^1_0(\Omega))\cap H^1((0,T),H^{-1}(\Omega))$ such that, 
for almost all $t\in (0,T)$ the following holds
\begin{equation}
	\begin{split}
		\left\{ \begin{array}{ll}  
			(\partial_{t}u,v)_{\Omega} + a(u,v) = (f,v)_{\Omega} \hspace{1cm} \forall v\in H^{1}_{0}(\Omega),\\
			\hspace{1.6cm} u(\cdot,0) = u^{0}.
		\end{array} \right.\label{eq82}
	\end{split}
\end{equation}
Here, the bilinear form $a(\cdot,\cdot)$ is defined by
\begin{equation}
	a(w,v):=\varepsilon\left(\nabla w,\nabla v\right)_{\Omega}+(\bbeta\cdot\nabla w,v)_{\Omega}+\mu( w,v)_{\Omega}\hspace{1cm}\forall v\in 	H^{1}_{0}(\Omega), \hspace{0.4cm} t\in (0,T).\label{eq81}
\end{equation}
In the above definition we have slightly abused the notation, as the convective term $\bbeta$ might depend on $t$, but unless the context requires it, we will always denote
this bilinear form by $a(\cdot,\cdot)$.  Since we have supposed that $\bbeta$ is solenoidal,  for each $t\in (0,T)$ the 
bilinear form  $a(\cdot,\cdot)$ induces the following ``energy'' norm in $	H^{1}_{0}(\Omega)$
\begin{equation*}
	\|v \|_{a}=\sqrt{a(v,v)}\hspace{0.4cm}  t\in [0,T].
\end{equation*}

The well-posedness of  \eqref{eq82} is a well-studied problem. In fact, this is a consequence of Lions' Theorem (see, e.g., \cite[Theorem~10.9]{MR2759829}).
Moreover, as a consequence of the maximum principle for parabolic partial differential equations (see, e.g., \cite[Theorem 12, Section 7.1]{evans2010partial}), the solution of \eqref{eq82}
reaches its extrema on $\big[\Omega \times \{0\} \big]\cup \big[\partial \Omega \times (0,T]\big]$ if $f=0$, and its extrema depend also on the values of $f$ otherwise.  Motivated by this, 
we make the following assumption on $u$, solution of \eqref{eq82}.

\noindent\underline{Assumption (A1):} We will suppose that the weak solution of \eqref{eq82} satisfies 
\begin{equation}
	0\leq u(\bx,t)\leq \kappa(t) \hspace{0.5cm}\text{for almost all}\ \  (\bx,t)\in  \Omega\times[0,T],\label{eq14}
\end{equation}
where $\kappa(t)$ is a known positive function dependent on $t$.

In the above assumption, the lower bound in \eqref{eq14} is not required to be zero; however, we set it to zero for the sake of clarity in the explanation. Additionally, the results presented in this work will remain valid if $\kappa(t)$ is replaced by a positive function $\kappa(\bx,t)$.

\subsection{Space discretisation}	

As it is usual in the discretisation of parabolic partial differential equations, we first discretise \eqref{eq82} only in space. 
To build a finite element space of $H^1_0(\Omega)$, 
let $\calP$ be a conforming, shape-regular partition of $\Omega$ into simplices (or affine quadrilaterals/hexahedra). 
Over $\calP$, and for $k\geq 1$, we define the finite element space
\begin{align}
	V_\calP^{}:=\{v_{h}\in C^{0}(\overline{\Omega}):v_{h}^{}|_{K}\in \mathfrak{R}(K)\ \ \  \forall K\in \calP \}\cap H^{1}_{0}(\Omega),\label{eq1}
\end{align}
where 
\begin{align}
	\mathfrak{R}(K)= \left\{ \begin{array}{ll} \mathbb{P}_{k}(K),  \hspace{1cm}\text{ if $K$ is a simplex},\\  \mathbb{Q}_{k}(K) , \hspace{1cm}\text{ if $K$ is an affine quadrilateral/hexahedron},\end{array} \right.
\end{align}  
where $\mathbb{P}_{k}(K)$ denotes the polynomials of total degree $k$ on $K$ and $\mathbb{Q}_{k}(K)$ denotes the mapped space of polynomial of degree of at most $k$ in each variable.

For a mesh $\mathcal{P}$, the following notations are used:
\begin{itemize}
	\item let $\{\bx_{1}^{},\bx_{2}^{},\ldots, \bx_{N}^{}\}$ denote the
	set of internal nodes; the usual Lagrangian basis functions
	associated to these nodes, spanning the space $V_{\calP}^{}$, are
	denoted by $\phi_{1}^{},\ldots,\phi_{N}^{}$;
	
	\item let $\calF_{I}^{}$ denote the set of internal facets,
	$\calF_{\partial}^{}$ denote the set of boundary facets of $\calP$; $\calF_{h}^{}=\calF_{I}^{}\cup \calF_{\partial}^{}$ denote the set
	of all facets of $\calP$; for an element $K\in\calP$ the set of its
	facets is denoted by $\calF_{K}^{}$;
	
	\item for a facet $F\in \mathcal{F}_{I}$, $\llbracket \cdot
	\rrbracket$ denotes the jump of a function across $F$.
\end{itemize}

The diameter of a set \(G \subset \mathbb{R}^d\) is denoted by \(h_G\), and the mesh size is defined as \(h = \max\{h_{K} : K \in \mathcal{P}\}\). We also define the mesh function \(\mathfrak{h}\) as a continuous, element-wise linear function that represents a local average of element diameters, commonly used in finite element analysis \cite{Makridakis:2018}. To construct this, we introduce the set of \textit{vertices} of the mesh, \(\boldsymbol{v}_1, \ldots, \boldsymbol{v}_M\), and define \(\mathfrak{h}\) as the piecewise linear function specified by the nodal values
\begin{equation}\label{mesh-function-definition}
	\frakh(\boldsymbol{v}_i^{})=\dfrac{\sum_{K: \boldsymbol{v}_i^{}\in K}h_K^{}}{\# \{K: \boldsymbol{v}_i^{}\in K\}}\,.
\end{equation} 

In the construction of the method, and its analysis, the following
mass-lumped $L^{2}$-inner product will be of importance: for every
$v_h^{},w_h^{}\in V_\calP^{}$, we
define
\begin{align}
	(v_{h},w_{h})_{h}=\sum_{i=1}^{M}\frakh (\boldsymbol{x}_{i})^{d}v_{h}(\boldsymbol{x}_{i})w_h^{}(\boldsymbol{x}_{i}),
\end{align}  
which induces the norm $\lvert
v_{h}\lvert_{h}:=(v_{h},v_{h})_{h}^{\frac{1}{2}}$ in $V_\calP^{}$. This
norm is, in fact, equivalent to the standard $L^2(\Omega)$-norm.  More
precisely, the following result, whose proof can be found in
\cite[Propositions~28.5, 28.6]{EG21-II}, will be used repeatedly in our analysis
below: There exist $C,c>0$, independent of $h$, such that
\begin{equation}
	c\,\sum_{i: \bx_i^{}\in K}h_K^d v_h^2(\bx_i^{})\leq \|v_h^{}\|_{0,K}^{2}\leq C\,\sum_{i: \bx_i^{}\in K}h_K^d v_h^2(\bx_i^{})
	\qquad\forall\, K\in \calP, ~~ \forall v_h^{}\in V_{\calP},\label{eq5}
\end{equation}
and thus, as a consequence of the shape-regularity  of the mesh, the following holds:
\begin{equation}
	c\,|v_{h}|_h^{2}\leq \|
	v_h^{}\|_{0,\Omega}^{2}\leq C\,|
	v_h^{}|_h^{2},\label{eq55}
\end{equation}
for all $v_h^{}\in V_\calP^{}$. 

Next, we recall that the Lagrange interpolation operator is defined by
(see, e.g., \cite[Chapter~11]{EG21-I})
\begin{align}
	i_{h}&:\mathcal{C}^{0}(\overline{\Omega})\cap H^1_0(\Omega)\longrightarrow V_{\mathcal{P}},\nonumber\\
	v&\longmapsto i_{h}v=\sum_{j=1}^{N}v(x_{j})\phi_{j}. \label{lagrange}
\end{align}

With the above ingredients, we now state some inequalities and
properties that will be useful in what follows:
\begin{list}{ }{ }
	\item \textbf{a) Inverse inequality:}(\cite[Lemma ~12.1]{EG21-I}) For
	all $m,\ell\in \mathbb{N}_0^{},\,0\le m\leq \ell$ and all $p,q \in
	[1,\infty]$, there exists a constant $C$, independent of $h$, such
	that
	\begin{align}
		| v_h^{}|_{\ell,p,K}^{}\leq Ch_K^{m-\ell+d\left(\frac{1}{p}-\frac{1}{q}\right)}\,| v_h^{}|_{m,q,K}^{}\qquad\forall\, v_h^{}\in V_\calP^{}\,.\label{inverse}
	\end{align}
	\item \textbf{b) Discrete Trace inequality:}
	(\cite[Lemma~2.15]{EG21-I}) There exists $C>0$ independent of $h$
	such that, for every $v\in H^{1}(K)$ the following holds
	\begin{equation}
		\|v \|^{2}_{0,\partial K} \leqslant   C \left(h_K^{-1}\|v \|^2_{0,K}+h_K^{}|v |^{2}_{1,K}\right).\label{trace}
	\end{equation}
	\item \textbf{c) Approximation property of the Lagrange interpolant:}
	(\cite[Proposition~1.12]{EG21-I}) Let  $1\leq \ell \leq k$ and
	$i_{h}$ be the Lagrange interpolant. Then, there exists $C>0$,
	independent of $h$,  such that for all $h$ and $v\in H^{\ell+1}(\Omega) \cap H^1_0(\Omega)$ the
	following holds:
	\begin{equation}
		\|v- i_{h}v \|_{0,K}+h_{K} |v-i_{h}v |_{1,K}\leq Ch^{\ell +1}_{K}|v |_{\ell +1,K}\,.\label{lagranges}
	\end{equation} 
\end{list}

The standard Galerkin semi-discretisation of \eqref{eq82}  using the finite element space (\ref{eq1}) reads:
\begin{equation}
	\begin{split}
		\left\{ \begin{array}{ll}
			\text{For almost all $t\in  (0,T)$, find $u_{h}\in V_{\calP}$ such that}\vspace{.1cm}
			\\  
			(\partial_{t}u_{h},v_{h})_{\Omega} + a(u_{h},v_{h}) = (f,v_{h})_{\Omega} \hspace{1cm} \forall v_{h}\in V_\calP,  \vspace{.1cm}\\
			\hspace{2.4cm} u_{h}(\cdot,0) = i_{h}u^{0}.
		\end{array} \right.\label{eq82412}
	\end{split}
\end{equation}

It is a well-known that the standard Galerkin finite element method for \eqref{eq82} in the convective-dominated regime leads to discrete solutions that are
polluted by global spurious oscillations, and thus, in particular, violate Assumption~(A1) (see, e.g. \cite{RST08} for a comprehensive review).
The usual way of enhancing the stability is to add a linear stabilising term aimed at dampening the
oscillations caused by the dominating convection.  Several alternatives are available, with stabilisations based on the addition of
symmetric semi-positive-definite being among the most popular for time-dependent problems. 
In this work we
have chosen to use the continuous interior penalty (CIP) method
originally proposed in \cite{MR2068903} and anlysed in detail for the time-dependent problem in \cite{BF09}. The CIP 
method adds the following stabilising term to the Galerkin scheme (\ref{eq82412}):
\begin{align}
	J(u_h^{},v_h^{})=\gamma \sum_{F\in\calF_I^{}}\int_F\|\bbeta \|_{0,\infty,F}^{} h_{F}^{2}\llbracket  \boldsymbol \nabla u_{h} \rrbracket \cdot\llbracket \boldsymbol \nabla v_{h}  \rrbracket \, \mathrm{d}s\,, \label{eq10}
\end{align}
where
$\gamma\ge 0$ is a non-dimensional constant, and thus it reads as follows:
\begin{equation}
	\begin{split}
		\left\{ \begin{array}{ll}
			\text{For almost all $t\in  (0,T)$, find $u_{h}\in V_{\calP}$ such that} \vspace{.1cm}
			\\  
			(\partial_{t}u_{h},v_{h})_{\Omega} + a_{J}(u_{h},v_{h}) = (f,v_{h})_{\Omega} \hspace{1cm} \forall v_{h}\in V_\calP,  \vspace{.1cm}\\
			\hspace{2.6cm} u_{h}(\cdot,0) = i_{h}u^{0}.
		\end{array} \right.\label{eq824}
	\end{split}
\end{equation}
where
\begin{equation}
	a_{J}(u_{h},v_{h}):=a(u_{h},v_{h})+	J(u_{h},v_{h})
	\hspace{1cm}\forall v_{h}\in V_\calP\,.\label{eq12}
\end{equation}
It is worth mentioning that, even if in this work we have chosen to add CIP stabilisation, the results presented herein remain
valid if we choose any symmetric stabilisation for the convective term,  in particular,  they are valid if we use any of the stabilised methods
analysed in \cite{BF09}.

Although \eqref{eq12} does help remove spurious oscillations, and provides a stable solution, its discrete solution does not 
preserve the physical bounds (\ref{eq14}).  In the next section we introduce the main ingredients to build a finite element method
that  enforces the bound\ (\ref{eq14}) on its solution.

\subsection{The admissible set}

Assumption~(A1) motivates the introduction of the following \textit{admissible set}, that is, the set of finite element
functions that satisfy the bound \eqref{eq14} at their  degrees
of freedom:
\begin{align}
	V_{\mathcal{P}}^{+}:=\{v_{h}\in V_{\mathcal{P}}: v_h(\boldsymbol{x}_{i})\in[0,\kappa(t)]\ \  \text{for all}\ \ i=1,\ldots,N  \}.\label{eq16}
\end{align} 
Every element $v_{h}\in V_\calP$ can be split as the sum
$v_{h}=v_h^{+}+v_h^{-}$, where $v_h^{+}$ and $v_h^{-}$ are given by
\begin{align}
	v_{h}^{+}=\sum_{i=1}^{M}\max\Big\{0,\min\{v_h(\boldsymbol{x}_{i}),\kappa(t)\}\Big\}\,\phi_i\,\quad {\rm for} \ \ t\in (0,T],\label{eq17}
\end{align}
and 
\begin{align}
	v_h^{-}=v_h-v_h^{+}. \label{eq18}
\end{align}
We refer to $v_{h}^{+}$ and $v_{h}^{-}$ as the \textit{constrained}
and \textit{complementary} parts of $v_{h}$, respectively.  Using this
decomposition we define the following algebraic projection
\begin{equation}
	(\cdot)^{+}:V_{\mathcal{P}}\rightarrow V_{\mathcal{P}}^{+}\quad,\quad
	v_{h}\rightarrow v_{h}^{+}\,. \label{posoperator}
\end{equation}  

\begin{Remark}
	Strictly speaking $(\cdot)^{+}$ should be denoted by $(\cdot)^{+,t}$, as $\kappa(t)$ depends on $t$. To lighten the notation, we will simply use $(\cdot)^{+}$ unless it is necessary  to specify the time. 
\end{Remark}

The following result, whose proof is identical to that of \cite[Lemma~3.2]{ABT}, will be useful in the analysis presented below.

\begin{lemma}\label{lem33}
	Let the operator $(\cdot)^{+}$ be defined in \eqref{posoperator}.
	There exists a constant $C>0$, independent of $h$, such that for all $ t\in [0,T]$ the following holds
	\begin{align}
		\|w^{+}_{h}-v^{+}_{h} \|_{0,\Omega}&\leq C \|w_{h}-v_{h} \|_{0,\Omega},\label{Lip1}\\
		\|v_{h}^{+} \|_{0,\Omega}&\leq C\kappa(t),	\label{bound}
	\end{align}
	for all $w_{h},v_{h}\in V_{\mathcal{P}}$.
\end{lemma}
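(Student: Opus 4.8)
The key structural observation is that both bounds reduce to properties of the scalar clamping map. Writing $g_t(s):=\max\{0,\min\{s,\kappa(t)\}\}$, the definition \eqref{eq17} says precisely that $v_h^{+}$ is the finite element function whose nodal value at $\bx_i$ is $g_t(v_h(\bx_i))$. Two elementary facts about $g_t:\mathbb{R}\to[0,\kappa(t)]$ drive the whole argument: it is $1$-Lipschitz, i.e. $|g_t(a)-g_t(b)|\le |a-b|$ for all $a,b\in\mathbb{R}$ (being a composition of the $1$-Lipschitz maps $s\mapsto\min\{s,\kappa(t)\}$ and $s\mapsto\max\{0,s\}$), and its range is contained in $[0,\kappa(t)]$, so that $0\le g_t(s)\le\kappa(t)$ for every $s$.

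For the Lipschitz estimate \eqref{Lip1}, I would first note that $w_h^{+}-v_h^{+}\in V_{\mathcal P}$ (as $V_{\mathcal P}$ is a vector space containing both constrained parts), so the norm equivalence \eqref{eq55} applies to it. Evaluating at the nodes and using the $1$-Lipschitz property gives $|w_h^{+}(\bx_i)-v_h^{+}(\bx_i)|=|g_t(w_h(\bx_i))-g_t(v_h(\bx_i))|\le |w_h(\bx_i)-v_h(\bx_i)|$ at every node $\bx_i$. The plan is then to chain three steps: apply the upper inequality in \eqref{eq55} to $w_h^{+}-v_h^{+}$ to pass from the $L^2$-norm to the mass-lumped norm $|\cdot|_h$; use the nodal inequality above termwise inside the sum defining $|\cdot|_h$ to replace the constrained parts by the original functions; and finally apply the lower inequality in \eqref{eq55} to $w_h-v_h$ to return to the $L^2$-norm. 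Taking square roots yields \eqref{Lip1} with a constant depending only on the equivalence constants of \eqref{eq5}.

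For the boundedness estimate \eqref{bound}, I would argue element by element using the local equivalence \eqref{eq5}. Since every nodal value of $v_h^{+}$ lies in $[0,\kappa(t)]$, the upper bound in \eqref{eq5} gives $\|v_h^{+}\|_{0,K}^2\le C\sum_{i:\bx_i\in K}h_K^d\,v_h^{+}(\bx_i)^2\le C\kappa(t)^2\,\#\{i:\bx_i\in K\}\,h_K^d$. The number of degrees of freedom per element is bounded in terms of $k$ and $d$ only, and shape-regularity yields $h_K^d\le C|K|$, so $\|v_h^{+}\|_{0,K}^2\le C\kappa(t)^2|K|$. Summing over $K\in\mathcal P$ and using $\sum_K|K|=|\Omega|$ then produces $\|v_h^{+}\|_{0,\Omega}^2\le C\kappa(t)^2|\Omega|$, which is \eqref{bound} after absorbing $|\Omega|$ into the constant.

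Neither step is genuinely hard; the only point requiring a little care is the geometric bookkeeping in the last step, namely converting the sum of nodal contributions into a multiple of $|\Omega|$. This is where shape-regularity is used (through $h_K^d\lesssim|K|$ and the bounded number of degrees of freedom per element), and it is precisely what makes the final constant $h$-independent. The main conceptual ingredient, the $1$-Lipschitz and range properties of the nodal clamping, is completely elementary, which is why the argument runs identically to that of \cite[Lemma~3.2]{ABT}.
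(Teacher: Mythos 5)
Your proof is correct and follows the same route as the argument the paper invokes (it cites the proof of \cite[Lemma~3.2]{ABT}, which rests on exactly these two ingredients): the pointwise $1$-Lipschitz and range properties of the nodal clamping map combined with the mass-lumped norm equivalences \eqref{eq5}--\eqref{eq55}, with shape-regularity giving $h$-independence of the constants. Nothing further is needed.
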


\section{The finite element method}\label{Sec:FEM}

In this section, we propose a $\theta$-scheme time-space discretisation of (\ref{eq824}). Let $N>0$ be a given positive integer. In what follows, we consider a partition of the time interval $[0,T]$ as $t_{0}=0<t_{1}<t_{2}<\cdots<t_{N}=T$ with the time step size $\Delta t_{n}:=t_{n}-t_{n-1}$. To simplify the notation we assume that the time step size is uniform i.e., $\Delta t_{n}=\Delta t=\frac{T}{N}$. In addition, the discrete value $u_{h}^{n}\in V_\calP$ stands for the approximation of $u^{n}=u(t_{n})$ in $V_\calP$ for $0\leq n \leq N$. For $\theta \in \left[\frac{1}{2},1\right]$, we denote
\begin{align*}
	\delta u_{h}^{n}&:=\frac{u_{h}^{n}-u_{h}^{n-1}}{\Delta t}\,,\\
	t_{n-1+\theta}&=\theta t_{n} +(1-\theta)t_{n-1}\quad,\quad
	u_{h}^{n-1+\theta} :=\theta u_{h}^{n} +(1-\theta)u_{h}^{n-1}\,.
\end{align*}

With these notations, the finite element method used in this work reads as follows:
\begin{equation}
	\left\{ \begin{array}{ll}
		\text{For $1\leq n \leq N$, find $u_{h}^{n}\in V_{\calP}$ such that}\vspace{.1cm}
		\\  
		(\delta (u_{h}^{n})^{+},v_{h})_{\Omega} + a_{h}(u_{h}^{n-1+\theta};v_{h}) = (f^{n-1+\theta},v_{h})_{\Omega} \hspace{1cm} \forall v_{h}\in V_\calP, \vspace{.1cm}\\
		\hspace{4.1cm} u_{h}^{0} = i_{h}u^{0}.
	\end{array} \right.\label{eq199}
\end{equation}
Here,  $a_{h}(\cdot;\cdot)$ is defined as 
\begin{equation}
	a_{h}(u_{h}^{n-1+\theta};v_{h}):=\theta a_{J}((u_{h}^{n})^{+},v_{h})+(1-\theta)a_{J}((u_{h}^{n-1})^{+},v_{h})+s((u_{h}^{n})^{-},v_{h})\,,
\end{equation}
where  the stabilisation term $s(\cdot,\cdot)$ is defined as
\begin{align}
	s(v_{h},w_{h})&:=\alpha\sum_{i=1}^{M}\left(\varepsilon\mathfrak{h} (\boldsymbol{x}_{i})^{d-2}+\|\bbeta(\boldsymbol{x},t_{n}) \|_{0,\infty,\omega_{i}}\mathfrak{h} (\boldsymbol{x}_{i})^{d-1}+\left(\frac{1}{\Delta t}+\mu\right)\mathfrak{h} (\boldsymbol{x}_{i})^{d}\right)v_{h}(\boldsymbol{x}_{i})w_{h}(\boldsymbol{x}_{i}).\label{eq202}
\end{align}

Setting $\tilde{\varepsilon}=\Delta t\theta\varepsilon,\tilde{\mathcal{\bbeta}}=\Delta t\theta \bbeta , \tilde{\mu}=(\mu \Delta t\theta+1)$ and $\tilde{J}(\cdot,\cdot)=\Delta t\theta J(\cdot,\cdot)$, 
we can define the following norm at each time step $t^n$:
\begin{align}
	\|v_{h} \|_{h, \theta\Delta t}
	:=
	\left(\tilde{\varepsilon} \|\nabla v_{h}\|^{2}_{0,\Omega}
	+
	\tilde{\mu}\lVert v_{h}\lVert_{0,\Omega}^{2}
	+
	\tilde{J}(v_{h},v_{h})\right)^{\frac{1}{2}}.
	\label{norm}
\end{align}
In addition, the
stabilising form $s(\cdot,\cdot)$ for $0\leq n \leq N$ induces the following norm on $V_\calP$
\begin{align}
	\|v_{h}\|_{s}:=\sqrt{	s(v_{h},v_{h})}\,.\label{eq21}
\end{align}

The following result is a direct consequence of \eqref{eq5} (see  \cite[Lemma~3.1]{ABT}  for the proof of a very similar result). 

\begin{lemma}\label{Lem:s}
	There exists a constant $C_{\rm equiv}^{}>0$, depending only on the shape regularity of $\calP$, such that
	\begin{align}
		\| v_{h}\|_{h, \theta\Delta t}^{2}\leq \Delta t\frac{C_{\rm equiv}}{\alpha}\|v_{h} \|_{s}^{2} \qquad\forall v_{h}\in V_\calP\,. \label{eq22}
	\end{align}
\end{lemma}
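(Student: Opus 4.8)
The plan is to estimate the three contributions to $\|v_{h}\|_{h,\theta\Delta t}^{2}$ one at a time against the corresponding weights in the nodal sum defining $\|v_{h}\|_{s}^{2}$. Expanding the right-hand side and cancelling the factor $\alpha$, the target is $\Delta t\,C_{\rm equiv}\sum_{i=1}^{M}\big(\varepsilon\mathfrak{h}(\bx_i)^{d-2}+\|\bbeta\|_{0,\infty,\omega_i}\mathfrak{h}(\bx_i)^{d-1}+(\tfrac{1}{\Delta t}+\mu)\mathfrak{h}(\bx_i)^{d}\big)v_{h}(\bx_i)^{2}$, so I would match $\tilde\mu\|v_{h}\|_{0,\Omega}^{2}$ to the reaction/time weight, $\tilde\varepsilon\|\nabla v_{h}\|_{0,\Omega}^{2}$ to the diffusion weight, and the CIP term $\tilde J(v_{h},v_{h})$ to the convective weight. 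The basic geometric facts I would use repeatedly are that shape-regularity gives $c\,\mathfrak{h}(\bx_i)\le h_{K}\le C\,\mathfrak{h}(\bx_i)$ whenever $\bx_i\in K$, and $c\,h_{K}\le h_{F}\le C\,h_{K}$ for each facet $F$ of $K$, together with the finite-overlap property that each node belongs to only a bounded number of elements and facets. Since $\theta\le 1$, all the prefactors $\Delta t\theta$ are bounded by $\Delta t$, and $\mu\Delta t\theta+1\le 1+\mu\Delta t=\Delta t(\tfrac{1}{\Delta t}+\mu)$, so the $\theta$-dependence plays no role beyond these trivial bounds.

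The mass/reaction term is immediate: \eqref{eq55} gives $\|v_{h}\|_{0,\Omega}^{2}\le C\,|v_{h}|_h^{2}=C\sum_i\mathfrak{h}(\bx_i)^{d}v_{h}(\bx_i)^{2}$, which after multiplying by $\tilde\mu=\mu\Delta t\theta+1\le 1+\mu\Delta t$ lands inside the reaction/time weight. For the diffusion term I would work elementwise: the inverse inequality \eqref{inverse} (with $\ell=1$, $m=0$, $p=q=2$) gives $\|\nabla v_{h}\|_{0,K}^{2}\le C h_{K}^{-2}\|v_{h}\|_{0,K}^{2}$, and then the local equivalence \eqref{eq5} yields $\|\nabla v_{h}\|_{0,K}^{2}\le C\sum_{i:\bx_i\in K}h_{K}^{d-2}v_{h}(\bx_i)^{2}$; summing over $K$ and replacing $h_{K}$ by $\mathfrak{h}(\bx_i)$ produces exactly the diffusion weight, so $\tilde\varepsilon\|\nabla v_{h}\|_{0,\Omega}^{2}\le \Delta t\,C\,\varepsilon\sum_i\mathfrak{h}(\bx_i)^{d-2}v_{h}(\bx_i)^{2}$.

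The CIP term is where the real work lies, and it is the step I expect to be the main obstacle, since a facet integral of a gradient jump must be turned into a nodal sum with precisely the power $\mathfrak{h}^{d-1}$. On an interior facet $F$ shared by $K_1,K_2$ I would bound the jump by the two one-sided traces, apply the discrete trace inequality \eqref{trace} to the polynomial $\nabla v_{h}$ on each element, and use \eqref{inverse} to absorb the resulting interior seminorm, obtaining $\|\nabla v_{h}\|_{0,F}^{2}\le C h_{K}^{-1}\|\nabla v_{h}\|_{0,K}^{2}$. A second application of \eqref{inverse} followed by \eqref{eq5} gives $\|\nabla v_{h}\|_{0,F}^{2}\le C\sum_{i:\bx_i\in K}h_{K}^{d-3}v_{h}(\bx_i)^{2}$, so the weight $h_{F}^{2}$ in $J$ combines with $h_{K}^{d-3}$ to yield $h_{K}^{d-1}$, i.e. $\mathfrak{h}(\bx_i)^{d-1}$. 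Summing over all interior facets, using finite overlap to bound the number of repetitions per node and the inclusion $F\subset K\subset\omega_i$ to replace $\|\bbeta\|_{0,\infty,F}$ by $\|\bbeta\|_{0,\infty,\omega_i}$, I arrive at $J(v_{h},v_{h})\le C\sum_i\|\bbeta\|_{0,\infty,\omega_i}\mathfrak{h}(\bx_i)^{d-1}v_{h}(\bx_i)^{2}$, hence $\tilde J(v_{h},v_{h})=\Delta t\theta J(v_{h},v_{h})$ is controlled by the convective weight. Taking $C_{\rm equiv}$ to be the maximum of the three constants and summing the three estimates completes the argument; the only delicate bookkeeping is the facet-to-node reorganisation in this last step, where one must verify that the doubled trace/inverse inequalities deliver exactly the exponent demanded by $s(\cdot,\cdot)$.
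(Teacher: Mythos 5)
Your proposal is correct and follows exactly the route the paper has in mind: the paper omits the details, stating the lemma is ``a direct consequence of \eqref{eq5}'' and citing \cite[Lemma~3.1]{ABT}, and your term-by-term matching (mass via \eqref{eq55}, diffusion via inverse inequality plus \eqref{eq5}, CIP via trace and inverse inequalities plus \eqref{eq5}, with $\theta\le 1$ absorbing the $\theta$-dependence) is precisely that argument spelled out. The only minor imprecision is that your constant for the jump term also absorbs the fixed non-dimensional CIP parameter $\gamma$ (and the polynomial degree $k$), which is consistent with the paper's own convention of attributing $C_{\rm equiv}$ solely to shape regularity.
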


\begin{Remark} At each time-level $0\leq n \leq N$, the finite element method (\ref{eq199}) is a particular case of the finite element method proposed in \cite{ABT}. In fact, at each time step $1\leq n \leq N$, 
	\eqref{eq199} can be written as
	\begin{align}
		((u_{h}^{n})^{+},v_{h})_{\Omega}+	\Delta t\theta a_{J}((u_{h}^{n})^{+},v_{h})+\Delta t s( (u_{h}^{n})^{-} ,v_{h})
		=
		F^{n}(v_{h})\hspace{1cm}\forall v_{h}\in V_\calP, \label{elliptic11}
	\end{align}
	where $a_{J}(\cdot,\cdot)$ is defined in \eqref{eq12},
	and
	\begin{align}
		F^{n}(v_{h})&:=\Delta t (f^{n-1+\theta},v_{h})_{\Omega}-\Delta t(1-\theta)	\varepsilon\big(\nabla (u_{h}^{n-1})^{+},\nabla v_{h}\big)_{\Omega}-
		\Delta t(1-\theta)	(\bbeta\cdot\nabla \left(u_{h}^{n-1})^{+},v_{h}\right)_{\Omega}\nonumber\\&\quad	-
		(\mu \Delta t(1-\theta)-1)\left( (u_{h}^{n-1})^{+},w_{h}\right)_{\Omega}
		-
		\Delta t(1-\theta)	J((u_{h}^{n-1})^{+},v_{h})\hspace{0.7cm}\forall v_{h}\in V_{\mathcal{P}}.\label{RHS}
	\end{align}
	The realisation that at each time step the method \eqref{eq199} is related to the method proposed in \cite{ABT} will be instrumental in the well-posedness result presented in the next section. $\Box$
\end{Remark} 

\subsection{Well-posedness}
In this section, we analyse  the well-posedness of \eqref{eq199}. 
The first step is given by the following
monotonicity result, whose the proof is similar to that of
\cite[Lemma~3.1]{BGPV23}.
\begin{lemma}\label{l31}
	The bilinear form $s(\cdot,\cdot)$ defined in \eqref{eq202} for $0\leq n \leq N$ satisfies
	the following inequalities: 
	\begin{align}
		s(v_{h}^- -w_{h}^{-}, v_{h}^{+}-w_h^+)\geq&\, 0\,,\label{eq23}\\
		s( v_{h}^-,w_{h}^+- v_{h}^+)\leq&\, 0\,,\label{eq24}
	\end{align} 
	for every $v_h, w_h\in V_\calP$.
\end{lemma}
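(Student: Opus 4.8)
The plan is to exploit the fact that $s(\cdot,\cdot)$ is a \emph{diagonal} (nodal) form. Reading off \eqref{eq202}, at the fixed time level $t_n$ we may write
\[
s(v_h,w_h)=\sum_{i=1}^{M} c_i\, v_h(\boldsymbol{x}_i)\,w_h(\boldsymbol{x}_i),
\qquad
c_i := \alpha\Big(\varepsilon\frakh(\boldsymbol{x}_i)^{d-2}+\|\bbeta(\boldsymbol{x},t_n)\|_{0,\infty,\omega_i}\frakh(\boldsymbol{x}_i)^{d-1}+\big(\tfrac{1}{\Delta t}+\mu\big)\frakh(\boldsymbol{x}_i)^{d}\Big)\ge 0 .
\]
Because there is no coupling between nodes, both inequalities reduce to a scalar statement \emph{at each node}, multiplied by the nonnegative weight $c_i$ and summed. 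The nonnegativity of the $c_i$ (immediate since $\varepsilon,\alpha,\Delta t^{-1},\frakh>0$ and $\mu\ge0$) is exactly what lets the sign of each nodal inequality survive the summation.

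The key structural observation I would make is that, by \eqref{eq17}--\eqref{eq18}, the nodal value $v_h^+(\boldsymbol{x}_i)=\max\{0,\min\{v_h(\boldsymbol{x}_i),\kappa(t)\}\}$ is precisely the Euclidean projection of the real number $v_h(\boldsymbol{x}_i)$ onto the interval $[0,\kappa(t)]$, while $v_h^-(\boldsymbol{x}_i)=v_h(\boldsymbol{x}_i)-v_h^+(\boldsymbol{x}_i)$ is the associated residual. The projection onto this closed convex set is therefore characterised by the single scalar variational inequality
\[
\big(v_h(\boldsymbol{x}_i)-v_h^+(\boldsymbol{x}_i)\big)\big(y-v_h^+(\boldsymbol{x}_i)\big)\le 0
\qquad\text{for all }y\in[0,\kappa(t)],
\]
which is the only analytic ingredient the proof needs.

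To obtain \eqref{eq24} I would apply this inequality at each node with the admissible test value $y=w_h^+(\boldsymbol{x}_i)\in[0,\kappa(t)]$; this gives $v_h^-(\boldsymbol{x}_i)\,\big(w_h^+(\boldsymbol{x}_i)-v_h^+(\boldsymbol{x}_i)\big)\le 0$ at every node, and multiplying by $c_i\ge0$ and summing yields $s(v_h^-,w_h^+-v_h^+)\le 0$. For \eqref{eq23} I would write the variational inequality twice --- once for $v_h$ tested against $y=w_h^+(\boldsymbol{x}_i)$ and once for $w_h$ tested against $y=v_h^+(\boldsymbol{x}_i)$ --- and add them; the resulting nodal inequality is exactly $\big[v_h^-(\boldsymbol{x}_i)-w_h^-(\boldsymbol{x}_i)\big]\big[v_h^+(\boldsymbol{x}_i)-w_h^+(\boldsymbol{x}_i)\big]\ge 0$, which is nothing but the firm nonexpansiveness of the scalar projection onto $[0,\kappa(t)]$. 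Weighting by $c_i\ge0$ and summing over $i$ then gives \eqref{eq23}.

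I do not expect a genuine obstacle here: the whole argument is a per-node reduction to the elementary variational inequality for projection onto $[0,\kappa(t)]$. The only points needing a little care are (i) confirming that the weights $c_i$ are nonnegative so that summation cannot flip a sign, and (ii) noting that at the \emph{fixed} time level $t_n$ the bound $\kappa(t)$, and hence the convex set $[0,\kappa(t)]$, is fixed, so that $(\cdot)^+$ genuinely is the orthogonal projection onto a closed convex set and the variational inequality is legitimately available.
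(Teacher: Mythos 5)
Your proposal is correct and follows essentially the same route as the paper, whose proof (deferred to \cite[Lemma~3.1]{BGPV23}) likewise exploits the diagonal nodal structure of $s(\cdot,\cdot)$ with nonnegative weights to reduce both inequalities to scalar statements at each node about the cutoff $a\mapsto\max\{0,\min\{a,\kappa(t)\}\}$. The only cosmetic difference is that you package the per-node step as the variational-inequality characterisation (firm nonexpansiveness) of the projection onto $[0,\kappa(t)]$, whereas the cited proof verifies the same scalar inequalities by direct monotonicity/case arguments.
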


The well-posedness of \eqref{eq199} is addressed now. For this, the connection pointed out at the end of the last section is exploited. Namely,
we use an approach very similar to the one used in \cite[Theorem~3.1]{ABT} to prove that, for each $n=1,\ldots,N$ the problem 
\eqref{elliptic11} has a unique solution, which imples that the problem \eqref{eq199} is well-posed.


\begin{theorem}
	\label{Theorem14}
	Let $n=1,\ldots, N$, then,
	\begin{enumerate}[a.]
		\item There exists $u_{h}^{n}\in V_{\mathcal{P}}$ that solves \eqref{elliptic11}.
		\item $(u_{h}^{n})^{+}\in V_\calP^+$ satisfies
		\begin{align}
			((u_{h}^{n})^{+},v_{h}-(u_{h}^{n})^{+})_{\Omega}+	\Delta t\theta a_{J}((u_{h}^{n})^{+},v_{h}-(u_{h}^{n})^{+})	\geq F^n(v_{h}-(u_{h}^{n})^{+})\qquad\forall v_{h}\in V_{\mathcal{P}}^{+}.\label{variational}
		\end{align}
		\item  $(u_{h}^{n})^{-}$ is the unique solution of
		\begin{align}
			\Delta ts((u_{h}^{n})^{-},v_{h})=F^{n}(v_{h})-	((u_{h}^{n})^{+},v_{h})_{\Omega}+	\Delta t\theta a_{J}((u_{h}^{n})^{+},v_{h}) \qquad\forall v_{h}\in V_{\mathcal{P}}.\label{Sunique}
		\end{align} 
		\item   The solution of \eqref{elliptic11} is unique.	
	\end{enumerate}
\end{theorem}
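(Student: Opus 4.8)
The plan is to reduce the entire statement to the unique solvability of the variational inequality \eqref{variational} on the closed convex set $V_\calP^+$, and then to recover a genuine solution of \eqref{elliptic11} by adding a complementary part produced by the diagonal form $s(\cdot,\cdot)$. To this end I introduce the bilinear form $B(z_h,v_h):=(z_h,v_h)_\Omega+\Delta t\,\theta\,a_J(z_h,v_h)$ on $V_\calP$. Since ${\rm div}\,\bbeta=0$ and the functions in $V_\calP$ vanish on $\partial\Omega$, the convective term is skew-symmetric and $(\bbeta\cdot\nabla v_h,v_h)_\Omega=0$; consequently
\[
B(v_h,v_h)=(1+\Delta t\,\theta\mu)\|v_h\|_{0,\Omega}^2+\Delta t\,\theta\varepsilon\|\nabla v_h\|_{0,\Omega}^2+\Delta t\,\theta J(v_h,v_h)=\|v_h\|_{h,\theta\Delta t}^2,
\]
so $B$ is coercive on $V_\calP$ with respect to $\|\cdot\|_{h,\theta\Delta t}$ (coercivity does not require symmetry, so the convective part is harmless). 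As $V_\calP^+$ is nonempty, closed and convex, the Lions--Stampacchia theorem yields a unique $z\in V_\calP^+$ with $B(z,v_h-z)\ge F^n(v_h-z)$ for all $v_h\in V_\calP^+$, i.e. \eqref{variational} for the candidate $z=(u_h^n)^+$.

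For part (a) I would next construct the complementary part. The form $s(\cdot,\cdot)$ in \eqref{eq202} is diagonal in the nodal basis with strictly positive entries (the $\tfrac{1}{\Delta t}$ contribution alone makes each coefficient positive), hence symmetric positive-definite, so there is a unique $w\in V_\calP$ with $\Delta t\,s(w,v_h)=F^n(v_h)-B(z,v_h)$ for all $v_h\in V_\calP$, which is \eqref{Sunique}. Setting $u_h^n:=z+w$, the identity $B(z,v_h)+\Delta t\,s(w,v_h)=F^n(v_h)$ delivers \eqref{elliptic11} \emph{provided} $(u_h^n)^+=z$ and $(u_h^n)^-=w$, and establishing this complementarity is the main obstacle. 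I would extract it nodally: writing $r_i:=B(z,\phi_i)-F^n(\phi_i)$, one has $B(z,v_h)-F^n(v_h)=\sum_i r_i v_h(\bx_i)$, so the box-constrained inequality \eqref{variational} decouples into $r_i(s-z(\bx_i))\ge0$ for every $s\in[0,\kappa]$ and every node $i$. This forces $r_i=0$ when $z(\bx_i)\in(0,\kappa)$, $r_i\ge0$ when $z(\bx_i)=0$, and $r_i\le0$ when $z(\bx_i)=\kappa$. Since $s(w,\phi_i)=c_i\,w(\bx_i)$ with diagonal entries $c_i>0$, equation \eqref{Sunique} gives $w(\bx_i)=-r_i/(\Delta t\,c_i)$, so the sign of $w(\bx_i)$ is opposite to that of $r_i$; hence $z(\bx_i)+w(\bx_i)$ equals $z(\bx_i)\in(0,\kappa)$ at inactive nodes, is $\le0$ where $z(\bx_i)=0$, and is $\ge\kappa$ where $z(\bx_i)=\kappa$. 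In every case the clipping in \eqref{eq17} returns $z(\bx_i)$, whence $(u_h^n)^+=z$ and $(u_h^n)^-=u_h^n-z=w$, completing (a).

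Part (b) then holds for \emph{any} solution of \eqref{elliptic11}, not merely the constructed one: testing \eqref{elliptic11} with $v_h-(u_h^n)^+$ for $v_h\in V_\calP^+$, the stabilisation contribution $\Delta t\,s((u_h^n)^-,v_h-(u_h^n)^+)$ is nonpositive by the monotonicity estimate \eqref{eq24} (applied with $(u_h^n)$ in the role of $v_h$ and the test function in the role of $w_h$, so that $w_h^+=v_h$ since $v_h\in V_\calP^+$), and discarding it turns the identity into exactly \eqref{variational}. Part (c) is immediate from the positive-definiteness of $s$ already invoked above: given $(u_h^n)^+$, equation \eqref{Sunique} admits the unique solution $(u_h^n)^-$.

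Finally, for part (d) I would combine the uniqueness of the variational inequality with part (c). If $u_h^1,u_h^2$ both solve \eqref{elliptic11}, then by part (b) both $(u_h^1)^+$ and $(u_h^2)^+$ solve the same inequality \eqref{variational}, whose solution is unique by Lions--Stampacchia, so $(u_h^1)^+=(u_h^2)^+$; feeding this common positive part into \eqref{Sunique} and using part (c) gives $(u_h^1)^-=(u_h^2)^-$, whence $u_h^1=u_h^2$. The only delicate step throughout is the nodal complementarity argument in (a); the remainder is a routine assembly of coercivity, the Lions--Stampacchia theorem, the diagonal positive-definiteness of $s$, and the monotonicity Lemma~\ref{l31}.
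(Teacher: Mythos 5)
Your proposal is correct, but it takes a genuinely different route from the paper's. For part (a) the paper lags the convective term and runs a Brouwer fixed-point argument: the map $T$ solves, for each frozen $\hat u_h^n$, a nonlinear problem whose principal part is \emph{symmetric} (their auxiliary form $B$ deliberately excludes convection), so that well-posedness of each inner problem can be imported from \cite{BGPV23}; parts (b)--(d) are then quoted from \cite{ABT}. You instead keep the convection inside the form $B(z_h,v_h)=(z_h,v_h)_\Omega+\Delta t\,\theta\,a_J(z_h,v_h)$, observe it is coercive because $(\bbeta\cdot\nabla v_h,v_h)_\Omega=0$, and apply Lions--Stampacchia on the box $V_\calP^+$ to solve the variational inequality \eqref{variational} \emph{first}; the full solution $u_h^n=z+w$ is then reconstructed by solving the diagonal system \eqref{Sunique} and verifying the nodal complementarity (KKT) conditions, which force the clipping in \eqref{eq17} to return $z$, so that $(u_h^n)^+=z$, $(u_h^n)^-=w$ and \eqref{elliptic11} holds. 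This reverses the paper's logical order (inequality first, equation second) and buys a self-contained, constructive argument: no compactness or fixed point, uniqueness of the constrained part for free from coercivity, an explicit formula for the complementary part, and complete proofs of (b)--(d) where the paper only cites \cite{ABT} --- your (b) correctly uses \eqref{eq24} from Lemma~\ref{l31} with a test function $w_h\in V_\calP^+$ so that $w_h^+=w_h$, and your (d) correctly combines VI uniqueness with the nondegeneracy of $s$. The price is that your complementarity step leans crucially on $s(\cdot,\cdot)$ being diagonal in the Lagrange basis with strictly positive weights, so that the sign of $w(\bx_i)$ is opposite to that of the nodal residual $r_i$; the paper's fixed-point route invokes only the monotonicity of $s$ and the machinery of the companion papers, and is therefore formally less tied to the precise algebraic structure of the stabilisation (both routes, of course, need $s$ positive definite on $V_\calP$ for part (c)).
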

\begin{proof} As was mentioned earlier, the proof of this result has many points in common with that of \cite[Theorem~3.1]{ABT}. So, we will skip many of the technical details
	and will just present the main arguments.
	
	a.	We begin by defining the following bilinear form 
	\begin{align*}
		B(v_{h},w_{h})
		:=
		\Delta t\theta	\varepsilon\left(\nabla v_{h} ,\nabla w_{h}\right)_{\Omega}+
		(\mu \Delta t\theta+1)( v_{h} ,w_{h})_{\Omega}
		+
		\Delta t\theta	J( v_{h} ,w_{h})\hspace{0.5cm}\forall v_{h},w_{h}\in V_{\mathcal{P}},
	\end{align*}
	and the mapping 
	\begin{align*}
		T:&V_{\mathcal{P}}\longrightarrow V_{\mathcal{P}},\\
		\hat{u}_{h}^{n}&\longrightarrow u_{h}^{n}=T(\hat{u}_{h}^{n}),\hspace{0.4cm} n=1,\cdots,N,
	\end{align*}
	where $u_{h}^{n}=T(\hat{u}_{h}^{n})$ solves the following equation
	\begin{align}
		B((u_{h}^{n})^{+},v_{h})+\Delta ts((u_{h}^{n})^{-},v_{h})=F^{n}(v_{h}) -\Delta t\theta	(\bbeta\cdot\nabla ((\hat{u}_{h}^{n})^{+} ,v_{h})_{\Omega},\label{eq49}
	\end{align}
	at each time-level $1\leq n \leq N$	and $F^n(\cdot)$ is defined in \eqref{RHS}.
	We observe that $u_{h}^{n}$ solves \eqref{elliptic11} if and only if
	$T(u_{h}^{n})=u_{h}^{n}$. So, the proof will consist on proving that $T$
	satisfies the hypotheses of Brouwer's fixed point Theorem
	\cite[Theorem~10.41]{renardy2006introduction}.
	
	\noindent i) \underline{$T$ is well-defined:} To prove that $T$ is
	well-defined, we see that \eqref{eq49} is a particular example of
	the method proposed in \cite{BGPV23}. So, using
	\cite[Theorem~3.2]{BGPV23}, there exists a unique solution $u_{h}^{n}\in
	V_\calP$ of \eqref{eq49}, and thus $T$ is well-defined.
	
	\noindent ii) \underline{ $T$ is continuous:} Using the monotonicity
	result proven in \cite[Theorem~3.3]{BGPV23} we obtain, for all
	$v_h,w_h\in V_\calP$
	\begin{equation*}
		B(v_{h}^{+}-w_{h}^{+},v_{h}-w_{h})+\Delta ts(v_{h}^{-}-w_{h}^{-},v_{h}-w_{h})\geq 
		C\|v_{h}-w_{h} \|_{h, \theta\Delta t}^{2}.
	\end{equation*}
	Next,  let $\hat{v}_{h}, \hat{w}_{h}\in V_{\mathcal{P}}$
	and let $v_{h}=T(\hat{v}_h)$ and $w_{h}=T(\hat{w}_h)$.  Then,  a lengthy calculation using the last result, \eqref{eq49}, integration by parts,  H\"older's inequality, 
	Lemma~\ref{lem33}, and \eqref{norm}, leads to the following Lipschitz continuity of the operator $T$:
	%
	\begin{align*}
		\|T(\hat{v}_{h})-T(\hat{w}_{h}) \|_{h, \theta\Delta t}\leq C \theta\Delta\, t\,\tilde{\varepsilon}^{-\frac{1}{2}}\|\bbeta\|_{0,\infty,\Omega}\|\hat{v}_{h}-\hat{w}_{h}\|_{0,\Omega}\,.
	\end{align*}
	
	\noindent iii) \underline{There exists $R>0$, such that
		$T(B(0,R))\subseteq B(0,R)$:} Let $\hat{z}_{h}\in V_\calP$ be
	arbitrary and $z_{h}=T(\hat{z}_{h})$. By using $v_{h}=z_{h}^{+}$ in
	\eqref{eq49},  we get
	\begin{align}
		B(z_{h}^{+},z_{h}^{+})&+\underbrace{\Delta ts(z_{h}^{-},z_{h}^{+})}_{\ge 0} =F(z_{h}^{+}) - \theta\Delta t(\bbeta \cdot \nabla \hat{z}_{h}^{+},z_{h}^{+})_{\Omega}
		\le M\|z_{h}^{+} \|_{h, \theta\Delta t}.\label{324t}
	\end{align}
	In fact, using the Cauchy-Schwarz and Young inequalities,  the fact that $\tilde{\mu}= 1+\mu \theta \Delta t\neq 0$ and  $\theta\geq \frac{1}{2}$, we obtain
	\begin{align}
		|F^n(z_{h}^{+})|&\leq C\left(\Delta t\tilde{\mu}^{-\frac{1}{2}}\parallel f^{n-1+\theta}\parallel_{0,\Omega}+  \Delta t\theta	\|\bbeta\|_{0,\infty,\Omega}\tilde{\varepsilon}^{-\frac{1}{2}}\tilde{\mu}^{-\frac{1}{2}}\|(u_{h}^{n-1})^{+} \|_{h, \theta\Delta t}\right.\nonumber\\&\quad\left.+\|(u_{h}^{n-1})^{+} \|_{h, \theta\Delta t}\right)	\|z_{h}^{+} \|_{h, \theta\Delta t}.\nonumber
	\end{align}	
	Moreover, integration by parts and the H\"older inequality yield
	\begin{equation*}
		\theta\Delta t(\bbeta \cdot \nabla \hat{z}_{h}^{+},z_{h}^{+})_{\Omega}
		\leq\theta\Delta t \tilde{\varepsilon}^{-\frac{1}{2}}\|\hat{z}_{h}^{+} \|_{0,\Omega}\| \bbeta\|_{0,\infty,\Omega}\parallel z_{h}^{+} \parallel_{h, \theta\Delta t}.
	\end{equation*}
	So,  setting
	\begin{align}
		M&:=C\left(\Delta t\tilde{\mu}^{-\frac{1}{2}}\parallel f^{n-1+\theta}\parallel_{0,\Omega}+  \Delta t\theta	\|\bbeta\|_{0,\infty,\Omega}\tilde{\varepsilon}^{-\frac{1}{2}}\tilde{\mu}^{-\frac{1}{2}}\|(u_{h}^{n-1})^{+} \|_{h, \theta\Delta t}\nonumber\right.\\& \quad\left.+\|(u_{h}^{n-1})^{+} \|_{h, \theta\Delta t}+\theta\Delta t \tilde{\varepsilon}^{-\frac{1}{2}}\kappa(t^n)\| \bbeta\|_{0,\infty,\Omega}\right), \label{boundM}
	\end{align}
	and using the ellipticity of $B(\cdot,\cdot)$, \eqref{bound}, and \eqref{324t} we arrive at the following bound for $z_h^+$:
	\begin{align*}
		\|z_{h}^{+} \|_{h, \theta\Delta t}
		\leq M\,.
	\end{align*}

	Next, we take $v_{h}=z_{h}^{-}$ in \eqref{eq49}.  Using analogous arguments, that is, integration by parts, H\"older's inequality,  \eqref{bound}, and Lemma~\ref{Lem:s}
	we derive the following bound for $z_h^-$:
	\begin{align*}
		\|z_{h}^{-} \|_{h,\theta\Delta t}
		\leq C_{2}(f^{n-1+\theta},u_{h}^{n-1},\tilde{\varepsilon},\tilde{\mu},\bbeta,\kappa(t^{n}),h,\Delta t),
	\end{align*}
	where $C_{2}(f^{n-1+\theta},u_{h}^{n-1},\tilde{\varepsilon},\tilde{\mu},\bbeta,\kappa(t^n),h,\Delta t)$ is independent of $z_h$.  Hence, $z_h=T(\hat{z}_h)$ satisfies the
	following (uniform) bound
	\begin{align*}
		\|z_{h} \|_{h,\theta\Delta t}&\leq \|z_{h}^{-} \|_{h,\theta\Delta t} +\|z_{h}^{+} \|_{h,\theta\Delta t} \leq   M+C_{2}(f^{n-1+\theta},u_{h}^{n-1},\tilde{\varepsilon},\tilde{\mu},\bbeta,\kappa(t^n),h,\Delta t)=:R.
	\end{align*}
	Therefore, $z_{h}=T(\hat{z}_{h})\in B(0,R)$, for every $\hat{z}_{h}\in
	V_{\mathcal{P}}$, which shows that $T(B(0,R))\subseteq B(0,R)$.
	Hence, using Brouwer's fixed point theorem, there exists one $u_{h}^{n}\in
	V_{\mathcal{P}}$ such that $T(u_{h}^{n})=u_{h}^{n}$. In other words, problem
	\eqref{elliptic11} has at least one solution.
	
	The proofs of (b) and (c) are identical to those of \cite[Lemma~3.3]{ABT}.
	Finally,  the proof of (d) is identical to that of  \cite[Corollary~3.1]{ABT}.
\end{proof}
	\begin{Remark} 
		It is worth mentioning that, due to the equivalence between (\ref{eq199}) and the variational inequality (\ref{variational}), and the well-posedness of the latter, $(u^{n}_{h})^{+}$ is independent of the choice of the stabilisation, as long as it satisfies (\ref{eq22}) and (\ref{eq23}). In particular, all the mentioned methods proven in this work are remained valid if $s(\cdot,\cdot)$
		defined in (\ref{eq202}) is replaced by 
		\begin{align*}
				s(v_{h},w_{h}) := \alpha \sum_{i=1}^{M} \left( \varepsilon \mathfrak{h} (\boldsymbol{x}_{i})^{d-2} 
				+ \|\bbeta(\boldsymbol{x},t_{n}) \|_{0,\infty,\omega_{i}} \mathfrak{h} (\boldsymbol{x}_{i})^{d-1} 
				+ \mu \mathfrak{h} (\boldsymbol{x}_{i})^{d} \right) v_{h}(\boldsymbol{x}_{i}) w_{h}(\boldsymbol{x}_{i}).
		\end{align*}
		In this case, the solution $(u^{n}_{h})^{+}$ remains unchanged, as it still satisfies \eqref{variational}, meaning the overall analysis remains the same.
		
		The inclusion of the factor $\frac{1}{\Delta t}$ in the time derivative was primarily motivated by the performance of the nonlinear solver. Without this factor, the nonlinear solver exhibited significantly slower convergence.
\end{Remark}
\section{Stability and Error analysis}\label{Sec:Error}

This section is devoted to proving a stability result and derive optimal error estimates for the method \eqref{eq199} in the particular case $\theta=1$, that is, in the case the time discretisation
is carried out using the implicit Euler method.  An important tool that will be used throughout is the following discrete Gronwall Lemma, proved originally in
\cite[Lemma~5.1]{heywood1990finite}.

\begin{lemma}\label{13}
	Let $k$, $B$, $a_{n}$, $b_{n}$, $c_{n}$, $\gamma_{n}$, $n=0,\ldots,m$, be non-negative numbers such that
	\begin{align*}
		a_{n}+k\sum_{n=0}^{m}b_{n}\leq k\sum_{n=0}^{m}\gamma_{n}a_{n}+k\sum_{n=0}^{m}c_{n}+B\hspace{0.5cm}{\rm for}\hspace{0.3cm}m\geq 0.
	\end{align*} 
	Suppose $k\gamma_{n}\leq 1$ for every $j$, and set $\sigma_{n}=(1-k\gamma_{n})^{-1}$. Then
	\begin{align}
		a_{m}+k\sum_{n=0}^{m}b_{n}\leq \exp\left(k\sum_{n=0}^{m}\sigma_{n}\gamma_{n}\right)\left(k\sum_{n=0}^{m}c_{n}+B\right)\hspace{0.5cm}{\rm for}\hspace{0.3cm}m\geq 0.\label{inequality11}
	\end{align} 
	
\end{lemma}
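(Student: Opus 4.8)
The plan is to treat Lemma~\ref{13} as the classical Heywood--Rannacher discrete Gronwall inequality and to prove it by absorbing the top-index term and then inducting on $m$. (Implicitly one reads the left-hand side of the hypothesis as $a_m+k\sum_{n=0}^m b_n$, and one needs $k\gamma_n<1$ so that the weights $\sigma_n$ are finite; if $k\gamma_n=1$ for some $n\le m$ the right-hand side of \eqref{inequality11} is $+\infty$ and there is nothing to prove.)

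First I would collapse the two unknown sequences into a single monotone quantity. Since every $b_n\ge 0$, the partial sums $S_m:=a_m+k\sum_{n=0}^m b_n$ satisfy $a_n\le S_n$, and because $\gamma_n\ge 0$ the hypothesis yields
\begin{equation*}
	S_m \le k\sum_{n=0}^m \gamma_n S_n + A_m, \qquad A_m:=k\sum_{n=0}^m c_n + B,
\end{equation*}
where $A_m$ is nondecreasing in $m$. This reduces the statement to a self-referential inequality purely in $S_m$, and it is exactly $S_m$ that appears on the left of the claimed estimate \eqref{inequality11}.

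Next I would isolate the term $k\gamma_m S_m$ on the right and move it to the left; dividing by $1-k\gamma_m>0$ and writing $\sigma_m=(1-k\gamma_m)^{-1}\ge 1$ gives
\begin{equation*}
	S_m \le \sigma_m\Big(k\sum_{n=0}^{m-1}\gamma_n S_n + A_m\Big).
\end{equation*}
I would then prove by induction on $m$ that $S_m\le Q_m A_m$ with $Q_m:=\prod_{n=0}^m \sigma_n$. The base case is immediate, and in the inductive step, after inserting $S_n\le Q_n A_n$ and bounding $A_n\le A_m$, everything hinges on the telescoping identity $k\gamma_n Q_n=Q_n-Q_{n-1}$ (with $Q_{-1}:=1$), which follows from the algebraic relation $k\gamma_n\sigma_n=\sigma_n-1$. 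Summing it telescopes $k\sum_{n=0}^{m-1}\gamma_n Q_n=Q_{m-1}-1$, and the inductive step closes exactly as $S_m\le \sigma_m A_m Q_{m-1}=Q_m A_m$.

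Finally I would convert the product $Q_m$ into the exponential appearing in \eqref{inequality11} via the elementary bound $\sigma_n=(1-k\gamma_n)^{-1}=1+k\gamma_n\sigma_n\le \exp(k\gamma_n\sigma_n)$, so that $Q_m=\prod_{n=0}^m\sigma_n\le \exp\big(k\sum_{n=0}^m\sigma_n\gamma_n\big)$; combined with $S_m\le Q_m A_m$ this is precisely the claimed estimate. The main obstacle is the self-referential character of the hypothesis, the unknown appearing on both sides with the sum running all the way to $n=m$: handling it crudely, by bounding $\sigma_m$ and $A_n$ by their largest values, loses too much and fails to reproduce the sharp weights $\sigma_n\gamma_n$ in the exponent. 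The telescoping identity $k\gamma_n Q_n=Q_n-Q_{n-1}$ is what keeps the induction tight, and pairing it with $1+x\le e^x$ is the decisive step.
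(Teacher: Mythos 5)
The paper does not actually prove this lemma: it is quoted verbatim from Heywood and Rannacher (their Lemma~5.1), with the citation standing in for a proof, so there is no in-paper argument to compare yours against. Your proof is correct and is essentially a clean reconstruction of the classical argument behind that result: you read the left-hand side of the hypothesis as $a_m+k\sum_{n=0}^{m}b_n$ (the statement's reuse of $n$ as both free index and summation index is a transcription quirk inherited from the literature), absorb the diagonal term $k\gamma_m S_m$ using $1-k\gamma_m>0$, run the induction with the weights $Q_m=\prod_{n=0}^{m}\sigma_n$ through the telescoping identity $k\gamma_n Q_n=Q_n-Q_{n-1}$, and pass from the product to the exponential via $\sigma_n=1+k\gamma_n\sigma_n\le \exp(k\gamma_n\sigma_n)$. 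Every one of these steps checks out, and the telescoping identity is indeed what preserves the sharp weights $\sigma_n\gamma_n$ in the exponent. One remark on the hypothesis: the lemma as transcribed assumes $k\gamma_n\le 1$, whereas your argument (like the original) requires the strict inequality $k\gamma_n<1$. Your disclaimer that equality makes the right-hand side of \eqref{inequality11} infinite is acceptable, except in the degenerate case $k\sum_{n=0}^{m}c_n+B=0$, where the right-hand side becomes the indeterminate product $\infty\cdot 0$ and the claimed bound can fail (e.g.\ $k=1$, $\gamma_0=1$, $a_0>0$, $b_0=c_0=B=0$ satisfies the hypothesis). This is a defect of the statement as reproduced in the paper, not of your proof; the original lemma assumes strict inequality, and under that assumption your argument is complete.
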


For the case $\theta=1$, the method \eqref{eq199} reads: 
\begin{equation}
	\begin{split}
		\left\{ \begin{array}{ll}
			\text{For $1\leq n \leq N$, find $u_{h}\in V_\calP$ such that} \vspace{.1cm} \\
			(\delta (u_{h}^{n})^{+},v_{h})_{\Omega} + a_{h}(u_{h}^{n};v_{h}) = (f^{n},v_{h})_{\Omega} \hspace{1cm} \forall v_{h}\in V_\calP,  \vspace{.1cm} \\
			\hspace{3.45cm} u^{0}_{h} = i_{h}u^{0}.
		\end{array} \right.\label{Euler}
	\end{split}
\end{equation}

To prove the stability, we use the test function $v_{h}=(u^{n}_{h})^{+}$ in \eqref{Euler}, and obtain
\begin{align*}
	(\delta (u_{h}^{n})^{+},(u^{n}_{h})^{+})_{\Omega}&+ \varepsilon(\nabla (u^{n}_{h})^{+},\nabla (u^{n}_{h})^{+})_{\Omega}+(\bbeta\cdot\nabla (u^{n}_{h})^{+},(u^{n}_{h})^{+})_{\Omega}\\&+ \mu((u^{n}_{h})^{+}, (u^{n}_{h})^{+})_{\Omega}+J((u^{n}_{h})^{+},(u^{n}_{h})^{+})+s((u^{n}_{h})^{-},(u^{n}_{h})^{+})=
	(f^{n},(u^{n}_{h})^{+})_{\Omega},\nonumber\hspace{2cm}
\end{align*}
or, equivalently, using that $\left(\bbeta\cdot\nabla (u^{n}_{h})^{+},(u^{n}_{h})^{+}\right)_{\Omega}=0$, 
\begin{align*}
	((u^{n}_{h})^{+}-(u^{n-1}_{h})^{+},(u^{n}_{h})^{+})_{\Omega}&+\Delta t \Big\{\varepsilon\, |(u^{n}_{h})^{+}|^2_{1,\Omega}
	+ \mu\,\|(u^{n}_{h})^{+}\|^2_{0,\Omega}+J((u^{n}_{h})^{+},(u^{n}_{h})^{+})\Big\}\\&+\Delta ts((u^{n}_{h})^{-},(u^{n}_{h})^{+})=
	\Delta t(f^{n},(u^{n}_{h})^{+})_{\Omega}.\nonumber\hspace{2cm}
\end{align*}
The relation $2p(p-q)=p^{2}+(p-q)^{2}-q^{2}$, the Cauchy-Schwarz inequality, and the fact that  $s((u^{n}_{h})^{-},(u^{n}_{h})^{+})\geq 0$ (by Lemma \ref{l31})  lead to
\begin{align}
	\parallel(u^{n}_{h})^{+}\parallel_{0,\Omega}^{2}&- \parallel( u ^{n-1}_{h})^{+}\parallel_{0,\Omega}^{2}+\parallel(u^{n}_{h})^{+}-( u ^{n-1}_{h})^{+}\parallel_{0,\Omega}^{2}+2\Delta t \left\{\varepsilon \lvert(u^{n}_{h})^{+}\rvert_{1,\Omega}^{2}\right.\nonumber\\&\left.+ \mu\parallel(u^{n}_{h})^{+}\parallel_{0,\Omega}^{2}+J((u^{n}_{h})^{+},(u^{n}_{h})^{+})\right\}\leq
	2\Delta t\parallel f^{n}\parallel_{0,\Omega}\parallel (u^{n}_{h})^{+}\parallel_{0,\Omega}. \label{eq1212}\hspace{2cm}
\end{align}
Using Young's inequality for the right hand side and then summing through $n$, $n=0,\ldots,m$, we get that
\begin{align*}
	\parallel(u^{m}_{h})^{+}\parallel_{0,\Omega}^{2}&+	\sum_{n=0}^{m}\parallel(u^{n}_{h})^{+}-( u ^{n-1}_{h})^{+}\parallel_{0,\Omega}^{2}+2\sum_{n=0}^{m}\Delta t \left\{\varepsilon \lvert(u^{n}_{h})^{+}\rvert_{1,\Omega}^{2} +\mu\parallel(u^{n}_{h})^{+}\parallel_{0,\Omega}^{2}\right.\nonumber\\&\left. +J((u^{n}_{h})^{+},(u^{n}_{h})^{+})\right\}\leq \parallel u^{0}_{h}\parallel_{0,\Omega}^{2}
	+\sum_{n=0}^{m}\Delta t\left(T\parallel f^{n}\parallel_{0,\Omega}^{2}+\frac{1}{T}\parallel (u^{n}_{h})^{+}\parallel_{0,\Omega}^{2}\right). 
\end{align*}
If we set $a_{n}=\parallel(u^{n}_{h})^{+}\parallel_{0,\Omega}^{2}$, $B=\parallel u^{0}_{h}\parallel_{0,\Omega}^{2}$, $k=1$, $\gamma_{n}=\frac{\Delta t}{T}$ and $\sigma_{n}=\left(1-\frac{\Delta t}{T}\right)^{-1}$, $	c_{n}=\Delta tT\parallel  f^{n}\parallel_{0,\Omega}^{2}$ and
\begin{align*}
	b_{n}=&2\Delta t \left(\varepsilon \lvert(u^{n}_{h})^{+}\rvert_{1,\Omega}^{2}+ \mu\parallel(u^{n}_{h})^{+}\parallel_{0,\Omega}^{2}+J((u^{n}_{h})^{+},(u^{n}_{h})^{+}) \right) ,
\end{align*}
then using the Gr\"{o}nwall's inequality Lemma \ref{13}, we get 
\begin{align*}
	\parallel(u^{m}_{h})^{+}\parallel_{0,\Omega}^{2}&+2\Delta t\sum_{n=0}^{m} \left(\varepsilon \lvert(u^{n}_{h})^{+}\rvert_{1,\Omega}^{2} + \mu\parallel(u^{n}_{h})^{+}\parallel_{0,\Omega}^{2}+J((u^{n}_{h})^{+},(u^{n}_{h})^{+})\right)\\&\leq 
	\exp{\left(\sum_{n=0}^{m}\frac{\Delta t}{T}\left(1-\frac{\Delta t}{T}\right)^{-1}\right)}\left(\parallel u^{0}_{h}\parallel_{0,\Omega}^{2}+\Delta tT\sum_{n=0}^{m}\parallel  f^{n}\parallel_{0,\Omega}^{2}\right)
	.\nonumber\hspace{2cm}
\end{align*}
In this way we have proved the following stability result for the scheme \eqref{Euler}.
\begin{lemma} Let $u^{n}_{h}\in  V_\calP$, for $n=1,\ldots, N$ solve \eqref{Euler}. Then the following stability estimates holds true: 
	\begin{align*}
		\max\limits_{1\leq m\leq N}	\parallel(u^{m}_{h})^{+}\parallel_{0,\Omega}^{2}&+2\Delta t\sum_{n=0}^{N} \left(\varepsilon \lvert(u^{n}_{h})^{+}\rvert_{1,\Omega}^{2} + \mu\parallel(u^{n}_{h})^{+}\parallel_{0,\Omega}^{2}+J((u^{n}_{h})^{+},(u^{n}_{h})^{+})\right)\\&\leq 
		e^{2}\left(\parallel u^{0}_{h}\parallel_{0,\Omega}^{2}+\Delta tT\sum_{n=0}^{N}\parallel  f^{n}\parallel_{0,\Omega}^{2}\right).
	\end{align*} 
\end{lemma}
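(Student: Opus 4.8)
The plan is to carry out a discrete energy argument and close it with the discrete Gronwall Lemma~\ref{13}. First I would test \eqref{Euler} with $v_h=(u_h^n)^+$. Two structural facts make the estimate work: since $\bbeta$ is solenoidal and $(u_h^n)^+\in H^1_0(\Omega)$, the convective contribution vanishes, $(\bbeta\cdot\nabla (u_h^n)^+,(u_h^n)^+)_\Omega=0$; and the stabilisation cross-term is non-negative, $s((u_h^n)^-,(u_h^n)^+)\ge 0$, by Lemma~\ref{l31}, so it can simply be moved to the favourable side. For the discrete time derivative I would expand $((u_h^n)^+-(u_h^{n-1})^+,(u_h^n)^+)_\Omega$ through the polarisation identity $2p(p-q)=p^2+(p-q)^2-q^2$, turning it into the telescoping difference $\|(u_h^n)^+\|_{0,\Omega}^2-\|(u_h^{n-1})^+\|_{0,\Omega}^2$ plus the non-negative increment $\|(u_h^n)^+-(u_h^{n-1})^+\|_{0,\Omega}^2$. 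This yields the intermediate inequality \eqref{eq1212}.

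To prepare for Gronwall, I would treat the forcing term by Cauchy--Schwarz and Young, writing $2\Delta t(f^n,(u_h^n)^+)_\Omega\le \Delta t\,T\|f^n\|_{0,\Omega}^2+\tfrac{\Delta t}{T}\|(u_h^n)^+\|_{0,\Omega}^2$, so that the diffusion, reaction and jump-penalty terms remain on the left and the only term requiring absorption is the one proportional to $\|(u_h^n)^+\|_{0,\Omega}^2$. Summing over $n$ then telescopes the norm differences (using $(u_h^0)^+=u_h^0$, valid because the interpolated initial datum already satisfies the bounds) and accumulates the dissipative contributions.

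Finally I would apply Lemma~\ref{13} with $a_n=\|(u_h^n)^+\|_{0,\Omega}^2$, $k=1$, $\gamma_n=\Delta t/T$, $c_n=\Delta t\,T\|f^n\|_{0,\Omega}^2$ and $B=\|u_h^0\|_{0,\Omega}^2$; the hypothesis $k\gamma_n\le1$ holds since $\Delta t\le T$. The one genuinely quantitative step---and the only real obstacle---is bounding the exponential prefactor: with $\Delta t=T/N$ one computes $\sum_{n=0}^{m}\tfrac{\Delta t}{T}\big(1-\tfrac{\Delta t}{T}\big)^{-1}=\tfrac{m+1}{N-1}\le\tfrac{N+1}{N-1}\le 2$, whence $\exp(\cdot)\le e^2$. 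Taking the maximum over $1\le m\le N$ on the left-hand side (legitimate because the bound on the right is uniform in $m$) and extending the dissipation sum up to $N$ then delivers the claimed estimate.
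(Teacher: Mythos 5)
Your proposal is correct and follows essentially the same route as the paper's own proof: testing with $(u_h^n)^+$, using the solenoidal convection and the monotonicity $s((u_h^n)^-,(u_h^n)^+)\ge 0$, the identity $2p(p-q)=p^2+(p-q)^2-q^2$, the same Young splitting with weights $T$ and $1/T$, and the discrete Gronwall lemma with identical parameter choices. You even make explicit the bound on the exponential prefactor that the paper leaves implicit; note only that your inequality $\tfrac{N+1}{N-1}\le 2$ requires $N\ge 3$, a harmless restriction that the paper's stated constant $e^2$ implicitly shares.
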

\begin{Remark}
	In the case particular case $f=0$, then \eqref{eq1212} implies that
	\begin{align*}
		\parallel(u^{m}_{h})^{+}\parallel_{0,\Omega}\leq \parallel u^{0}_{h}\parallel_{0,\Omega}\,,
	\end{align*} 
	and then \eqref{Euler} is strongly stability preserving. $\Box$
\end{Remark}

The next result states optimal order error estimates for the method \eqref{Euler}.

\begin{theorem}\label{Theorem11}
	Let $u^{0}\in H^{k+1}(\Omega)$, $u\in  L^{\infty}((0,T);H^{k+1}(\Omega))\cap H^{1}((0,T);H^{k+1}(\Omega))\cap H^{2}((0,T);L^{2}(\Omega))$ be the solution of (\ref{CDR}), $u(\cdot,t)\in H^{1}_{0}(\Omega)$ for almost all $t\in [0,T]$, and $u_{h}^{n}\in V_{\calP}$ be the solution of (\ref{Euler}) at the time step $n$. Then, defining $E^{m}=(u^{m}_{h})^{+}-u^{m}$, there exists a constant $C>0$,
	independent of $h, \Delta t$ and any physical parameter, such that
	\begin{align}
		\max\limits_{1\leq m\leq N}&\parallel E^{m}\parallel_{0,\Omega}^{2} + \Delta t\sum_{n=1}^{N}\bigg(\varepsilon\lvert  E^{n}_{h}\rvert_{1,\Omega}^{2} + \mu\parallel E^{n}_{h}\parallel_{0,\Omega}^{2} + J(E^{n}_{h},E^{n}_{h})\bigg)\nonumber\\
		& \leq Ce^{2}\Bigg[h^{2k}\Bigg\{\Delta t \sum_{n=0}^{N} \Bigg(\varepsilon  +  T\|\bbeta\|_{0,\infty,\Omega}^{2}  +h\gamma \|\bbeta\|_{0,\infty,\Omega}+ h^{2}T\mu^{2}\Bigg)|u^{n} |_{k+1,\Omega}^{2}\nonumber\\
		& \quad + h^{2}\max\limits_{1\leq m\leq N}|u^{m} |_{k+1,\Omega}^{2} + Th^{2}\int_{0}^{T}|\partial_{t} u(t)|_{k+1,\Omega}^{2}{\rm d}t\Bigg\}  + \Delta t^2 T\int_{0}^{T}\parallel \partial_{tt} u(t)\parallel_{0,\Omega}^{2}{\rm d}t\Bigg].\label{error11}
	\end{align}
\end{theorem}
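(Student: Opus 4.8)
The plan is to follow the classical energy argument for parabolic finite element problems, adapted to the nodal projection through a careful exploitation of the monotonicity of $s(\cdot,\cdot)$. First I would split the error with the Lagrange interpolant $i_h$ from \eqref{lagrange}: setting $\eta^n:=(u_h^n)^+ - i_h u^n\in V_\calP$ and $\rho^n:=i_h u^n - u^n$, I write $E^n=\eta^n+\rho^n$, so that $\rho^n$ is controlled directly by the interpolation estimate \eqref{lagranges} and only the discrete part $\eta^n$ requires an energy bound. A key observation used throughout is that, by Assumption~(A1), $u^n$ satisfies $0\le u^n\le\kappa(t_n)$ pointwise, whence $i_h u^n$ satisfies the same bounds at every node; therefore $i_h u^n\in V_\calP^+$, i.e.\ $(i_h u^n)^+=i_h u^n$ and $(i_h u^n)^-=0$. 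Since $u_h^0=i_h u^0$, this also yields $\eta^0=0$.

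Next I would derive the equation governing $\eta^n$ by testing \eqref{Euler} with $v_h=\eta^n$ and subtracting the weak formulation \eqref{eq82} evaluated at $t_n$ against the same test function. Using $a_J=a+J$ and the smoothness of $u$ (so the jump terms vanish, $J(u^n,\cdot)=0$), together with $\delta(u_h^n)^+ = \delta\eta^n + i_h\delta u^n$, this produces
\[
(\delta\eta^n,\eta^n)_\Omega + a(\eta^n,\eta^n) + J(\eta^n,\eta^n) + s((u_h^n)^-,\eta^n) = -\big(i_h\delta u^n-\partial_t u^n,\eta^n\big)_\Omega - a(\rho^n,\eta^n) - J(\rho^n,\eta^n).
\]
The decisive structural point is the sign of the stabilisation term: applying \eqref{eq24} of Lemma~\ref{l31} with $v_h=u_h^n$ and $w_h=i_h u^n$, and using $(i_h u^n)^+=i_h u^n$, gives $s((u_h^n)^-,\,i_h u^n-(u_h^n)^+)\le 0$, that is $s((u_h^n)^-,\eta^n)\ge 0$. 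This lets me keep it on the left with the correct sign, which is precisely what makes the estimate close without requiring any control of the complementary part $(u_h^n)^-$. I expect this to be the heart of the argument.

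On the left I would then bound below in the usual way: the identity $2p(p-q)=p^2+(p-q)^2-q^2$ turns $(\delta\eta^n,\eta^n)_\Omega$ into the telescoping term $\tfrac{1}{2\Delta t}(\|\eta^n\|_{0,\Omega}^2-\|\eta^{n-1}\|_{0,\Omega}^2)$ plus a nonnegative increment, while the convective part of $a(\eta^n,\eta^n)$ vanishes by skew-symmetry (${\rm div}\,\bbeta=0$), leaving $\varepsilon|\eta^n|_{1,\Omega}^2+\mu\|\eta^n\|_{0,\Omega}^2+J(\eta^n,\eta^n)$. On the right I estimate each consistency term by Cauchy--Schwarz and Young: the diffusive part $\varepsilon(\nabla\rho^n,\nabla\eta^n)_\Omega$ absorbs half of $\varepsilon|\eta^n|_{1,\Omega}^2$ and leaves $\varepsilon|\rho^n|_{1,\Omega}^2$; the convective part is bounded directly by $\|\bbeta\|_{0,\infty,\Omega}|\rho^n|_{1,\Omega}\|\eta^n\|_{0,\Omega}$ with weight $T$, yielding a term $\sim T\|\bbeta\|_{0,\infty,\Omega}^2|\rho^n|_{1,\Omega}^2$ and a contribution $\tfrac1T\|\eta^n\|_{0,\Omega}^2$; the reaction term is handled identically, producing the $h^2T\mu^2$ contribution; and $J(\rho^n,\eta^n)$ absorbs half of $J(\eta^n,\eta^n)$ and leaves $J(\rho^n,\rho^n)$, estimated via the discrete trace inequality \eqref{trace} and \eqref{lagranges} to give the $h\gamma\|\bbeta\|_{0,\infty,\Omega}$ term. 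Finally, the temporal residual is split as $i_h\delta u^n-\partial_t u^n=(\delta u^n-\partial_t u^n)+(i_h\delta u^n-\delta u^n)$; a Taylor remainder bound on the first piece produces the $\Delta t^2T\int_0^T\|\partial_{tt}u\|_{0,\Omega}^2$ term, and applying \eqref{lagranges} to $\delta u^n$ (with Cauchy--Schwarz in time) gives the $Th^2\int_0^T|\partial_t u|_{k+1,\Omega}^2$ term.

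I would then multiply through by $2\Delta t$, sum over $n=1,\ldots,m$ so that the telescoping term collapses (using $\eta^0=0$), and invoke the discrete Gronwall inequality of Lemma~\ref{13} with $\gamma_n=\tfrac{\Delta t}{T}$, the accumulated $\tfrac1T\|\eta^n\|_{0,\Omega}^2$ terms playing the role of $\gamma_n a_n$; this produces the factor $e^2$. After replacing all interpolation-error norms by \eqref{lagranges}, this controls $\max_m\|\eta^m\|_{0,\Omega}^2$ and the energy sum of the $\eta^n$. The stated bound for $E^n$ then follows from the triangle inequality $E^n=\eta^n+\rho^n$, the remaining interpolation error at the final time contributing the $h^2\max_m|u^m|_{k+1,\Omega}^2$ term. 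The one genuinely delicate point, beyond bookkeeping, is the sign argument for $s((u_h^n)^-,\eta^n)$; everything else is a routine, if lengthy, combination of interpolation, trace and Young inequalities.
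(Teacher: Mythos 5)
Your proposal is correct and follows essentially the same route as the paper's proof: the same splitting via the Lagrange interpolant (your $\eta^n,\rho^n$ are the paper's $E_h^n,\eta_h^n$), the same key monotonicity step $s((u_h^n)^-,\eta^n)\ge 0$ from Lemma~\ref{l31} using $(i_h u^n)^-=0$, the same telescoping/Young estimates with the $\tfrac1T\|\eta^n\|_{0,\Omega}^2$ terms feeding the discrete Gronwall inequality of Lemma~\ref{13}, the same treatment of the temporal residual by Taylor expansion and interpolation, and the final triangle inequality. No substantive differences to report.
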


\begin{proof}
	For $n=1,\ldots,N$ we decompose $E^{n}=(u^{n}_{h})^{+}-u^{n}$ as 
	\begin{align}
		E^{n}=(u^{n}_{h})^{+}-u^{n}=\left((u^{n}_{h})^{+}-i_{h}u^{n}\right)+\left(i_{h}u^{n}-u^{n}\right)=:E^{n}_{h}+\eta^{n}_{h},\label{error1}
	\end{align} 
	where we recall that $i_{h}u^{n}$ is the Lagrange interpolant \eqref{lagrange} of $u^{n}$.
	Subtracting \eqref{eq81} from the method (\ref{Euler}) we arrive at the following error equation
	\begin{align}
		\left(\delta (u_{h}^{n})^{+}-\partial_{t}u^{n},v_{h}\right)_{\Omega}&+\varepsilon\left(\nabla ((u^{n}_{h})^{+}-u^{n}),\nabla v_{h}\right)_{\Omega}+\left(\bbeta\cdot\nabla ( (u^{n}_{h})^{+}-u^{n}),v_{h}\right)_{\Omega}	\nonumber\\&+\mu\left( (u^{n}_{h})^{+}-u^{n},v_{h}\right)_{\Omega}+J((u^{n}_{h})^{+},v_{h})+	s((u^{n}_{h})^{-},v_{h})=0.\label{orthoganal}
	\end{align} 
	Rearranging and using \eqref{error1},we get  
	\begin{align}
		(\delta E^{n}_{h},v_{h})_{\Omega}&+\varepsilon\left(\nabla E^{n}_{h},\nabla v_{h}\right)_{\Omega}+(\bbeta\cdot \nabla E^{n}_{h}, v_{h})_{\Omega}+\mu( E^{n}_{h}, v_{h})_{\Omega}+J((u^{n}_{h})^{+},v_{h})+	s((u^{n}_{h})^{-},v_{h})\nonumber\\&=-(\delta (i_{h}u^{n})-\partial_{t} u^{n},v_{h})_{\Omega}-\varepsilon( \nabla\eta^{n}_{h},\nabla v_{h})_{\Omega}-(\bbeta\cdot \nabla \eta^{n}_{h}, v_{h})_{\Omega}-\mu( \eta^{n}_{h}, v_{h})_{\Omega}\label{Galerkin12}.
	\end{align} 
	Since $u^{n}\in H^{2}(\Omega)$, then $J(u^{n}, v)=0$, and so we deduce that
	\begin{align*}
		J((u^{n}_{h})^{+}, v)=J((u^{n}_{h})^{+}-u^{n}, v)=	J(E_{h}^{n}, v)+J(\eta^{n}_{h}, v).
	\end{align*}
	Using the test function $v_{h}=E^{n}_{h}$ in (\ref{Galerkin12}) and $(\bbeta\cdot \nabla E^{n}_{h}, E^{n}_{h})_{\Omega}=0$, we get
	\begin{align}
		&(\delta E^{n}_{h},E^{n}_{h})_{\Omega}+\varepsilon\,|E^{n}_{h}|^2_{1,\Omega}+\mu\,\|E^{n}_{h}\|^2_{0,\Omega}+J(E^{n}_{h}, E^{n}_{h})+	s((u^{n}_{h})^{-},E^{n}_{h})\nonumber\\&
		=-(\delta (i_{h}u^{n})-\partial_{t} u^{n},E^{n}_{h})_\Omega^{}-\varepsilon\left(\nabla \eta^{n}_{h},\nabla E^{n}_{h}\right)_{\Omega}-(\bbeta\cdot \nabla \eta^{n}_{h},E^{n}_{h})_{\Omega}-\mu( \eta^{n}_{h},E^{n}_{h})_{\Omega}-J(\eta^{n}_{h}, E^{n}_{h}).\label{Galerkin122}
	\end{align} 
	
	Since $(i_{h}u^{n})^{-}=0$,  the monotonicity inequality (\ref{eq24}) yields
	\begin{align}
		s(( u^{n}_{h})^{-},E^{n}_{h})&=	s(( u^{n}_{h})^{-},(u^{n}_{h})^{+}-i_{h}u^{n})=s(( u^{n}_{h})^{-}-(i_{h}u^{n})^{-},(u^{n}_{h})^{+}-(i_{h}u^{n})^{+})\geq 0.\label{monoton1}
	\end{align} 
	So, using the relation $2p(p-q)=p^{2}+(p-q)^{2}-q^{2}$ for the first term of (\ref{Galerkin122}), applying the inequality (\ref{monoton1}), next using the Cauchy–Schwarz inequality and then Young's inequality for 
	the terms on the right hand side, we get
	\begin{align*}
		\parallel E^{n}_{h}\parallel_{0,\Omega}^{2}&-\parallel E^{n-1}_{h}\parallel_{0,\Omega}^{2}+\parallel E^{n}_{h}-E^{n-1}_{h}\parallel_{0,\Omega}^{2} +2\Delta t\left(\varepsilon\lvert  E^{n}_{h}\rvert_{1,\Omega}^{2}+\mu\parallel E^{n}_{h}\parallel_{0,\Omega}^{2}+J(E^{n}_{h}, E^{n}_{h})\right)\\&\leq 2\Delta t \Big(\parallel \delta (i_{h}u^{n})-\partial_{t} u^{n}\parallel_{0,\Omega}\parallel E^{n}_{h}\parallel_{0,\Omega}+\varepsilon\lvert  \eta^{n}_{h}\rvert_{1,\Omega}\lvert  E^{n}_{h}\rvert_{1,\Omega}+\parallel\bbeta\cdot \nabla \eta^{n}_{h}\parallel_{0,\Omega}\parallel E^{n}_{h}\parallel_{0,\Omega}\\&\quad+\mu\parallel \eta^{n}_{h}\parallel_{0,\Omega}\parallel E^{n}_{h}\parallel_{0,\Omega}+J(\eta^{n}_{h}, \eta^{n}_{h})^\frac{1}{2}J(E^{n}_{h}, E^{n}_{h})^\frac{1}{2}\Big) \\ &\leq \Delta t \Big(\varepsilon\lvert  \eta^{n}_{h}\rvert_{1,\Omega}^{2}+\varepsilon\lvert  E^{n}_{h}\rvert_{1,\Omega}^{2}+ T\left(\parallel \delta (i_{h}u^{n})-\partial_{t} u^{n}\parallel_{0,\Omega}+\parallel\bbeta\cdot \nabla \eta^{n}_{h}\parallel_{0,\Omega}+\mu\parallel \eta^{n}_{h}\parallel_{0,\Omega}\right)^{2}\\&\quad+ \frac{1}{T}\parallel E^{n}_{h}\parallel_{0,\Omega}^{2}
		+J(E^{n}_{h}, E^{n}_{h})+J(\eta^{n}_{h}, \eta^{n}_{h})\Big).
	\end{align*} 
	Rearranging terms on the both sides of the inequality yields
	\begin{align*}
		\parallel E^{n}_{h}\parallel_{0,\Omega}^{2}&-\parallel E^{n-1}_{h}\parallel_{0,\Omega}^{2}+	\parallel E^{n}_{h}-E^{n-1}_{h}\parallel_{0,\Omega}^{2} +\Delta t\left(\varepsilon\lvert  E^{n}_{h}\rvert_{1,\Omega}^{2}+2\mu\parallel E^{n}_{h}\parallel_{0,\Omega}^{2}+J(E^{n}_{h}, E^{n}_{h})\right)\\&\leq \Delta t \left(\varepsilon\lvert  \eta^{n}_{h}\rvert_{1,\Omega}^{2}+  T\left(\parallel \delta (i_{h}u^{n})-\partial_{t} u^{n}\parallel_{0,\Omega}+\parallel\bbeta\parallel_{0,\infty,\Omega}\lvert  \eta^{n}_{h}\rvert_{1,\Omega}+\mu\parallel \eta^{n}_{h}\parallel_{0,\Omega}\right)^{2}\right.\\&\left.\quad+\frac{1}{T}\parallel E^{n}_{h}\parallel_{0,\Omega}^{2}+J(\eta^{n}_{h}, \eta^{n}_{h})\right)\\&\leq C\Delta t\left(\left(\varepsilon+T\|\bbeta\|_{0,\infty,\Omega}^{2}\right)\lvert  \eta^{n}_{h}\rvert_{1,\Omega}^{2} +  T\parallel \delta(i_{h}u^{n})-\partial_{t} u^{n}\parallel_{0,\Omega}^{2}+T\mu^{2}\parallel\eta^{n}_{h} \parallel_{0,\Omega}^{2} \right.\\&\quad\left.+\frac{1}{T}\parallel E^{n}_{h}\parallel_{0,\Omega}^2+J(\eta^{n}_{h}, \eta^{n}_{h})\right).
	\end{align*} 
	Summing for $n=0$ to $m$  and using $E^{0}_{h}=0$ leads to
	\begin{align*}
		\parallel E^{m}_{h}\parallel_{0,\Omega}^{2}&+\Delta t\sum_{n=1}^{m}\left(\varepsilon\lvert  E^{n}_{h}\rvert_{1,\Omega}^{2}+2\mu\parallel E^{n}_{h}\parallel_{0,\Omega}^{2}+J(E^{n}_{h}, E^{n}_{h})\right)\\
		&\leq C\,\Delta t\sum_{n=0}^{m}\left\{\left(\varepsilon+T\|\bbeta\|_{0,\infty,\Omega}^{2}\right)\lvert  \eta^{n}_{h}\rvert_{1,\Omega}^{2} +T\parallel \delta (i_{h}u^{n})-\partial_{t} u^{n}\parallel_{0,\Omega}^{2} \right.\\&\left.\quad +T\mu^{2}\parallel\eta^{n}_{h} \parallel_{0,\Omega}^{2} +\frac{1}{T}\parallel E^{n}_{h}\parallel_{0,\Omega}^2+J(\eta^{n}_{h}, \eta^{n}_{h})\right\}.
	\end{align*} 
	
	We are now ready to use Gr\"{o}nwall's Lemma \ref{13} with $k=1$,  $\gamma_{n}=\frac{\Delta t}{T}$,  $\sigma_{n}=(1-\frac{\Delta t}{T})^{-1}~$, $a_{n}=\parallel E^{n}_{h}\parallel_{0,\Omega}$, $B=0$ 
	and 
	\begin{align*}
		c_{n}&=\Delta t\left(\left(\varepsilon+T\|\bbeta\|_{0,\infty,\Omega}^{2}\right)\lvert  \eta^{n}_{h}\rvert_{1,\Omega}^{2} +T\parallel \delta (i_{h}u^{n})-\partial_{t}u^{n}\parallel_{0,\Omega}^{2}+T\mu^{2}\parallel\eta^{n}_{h} \parallel_{0,\Omega}^{2}+J(\eta^{n}_{h}, \eta^{n}_{h})\right),
	\end{align*}
	which gives
	\begin{align}
		\parallel E^{m}_{h}\parallel_{0,\Omega}^{2}&+\Delta t\sum_{n=1}^{m}\left(\varepsilon\lvert  E^{n}_{h}\rvert_{1,\Omega}^{2}+2\mu\parallel E^{n}_{h}\parallel_{0,\Omega}^{2}+J(E^{n}_{h},E^{n}_{h})\right)\nonumber\\&\leq Ce^{2}\bigg[\Delta t\sum_{n=0}^{m}\bigg\{\left(\varepsilon+T\|\bbeta\|_{0,\infty,\Omega}^{2}\right) \lvert  \eta^{n}_{h}\rvert_{1,\Omega}^{2}+T\parallel \delta (i_{h}u^{n})-\partial_{t} u^{n}\parallel_{0,\Omega}^{2} \label{Grownwall12}\\&\quad+T\mu^{2} \parallel\eta^{n}_{h} \parallel_{0,\Omega}^{2}+J(\eta^{n}_{h}, \eta^{n}_{h})\bigg\}\bigg].\nonumber
	\end{align} 
	
	Next, to reach the error estimate \eqref{error11} we bound each term of the right hand side of \eqref{Grownwall12}. First, using the triangle inequality, yields 
	\begin{align*}
		\parallel \delta(i_{h}u^{n})- \partial_{t} u^{n}\parallel_{0,\Omega}\leq \parallel \delta(i_{h}u^{n})-\delta u^{n}\parallel_{0,\Omega}+\parallel \delta u^{n}-\partial_{t} u^{n}\parallel_{0,\Omega}.
	\end{align*} 
	For the first term in the above inequality, using \eqref{lagranges},  the Taylor's Theorem and the Cauchy-Schwarz inequality, we have
	\begin{align*}
		\parallel  \delta(i_{h}u^{n})-\delta u^{n}\parallel_{0,\Omega}^{2}&\leq Ch^{2k+2}|\delta u^{n} |_{k+1,\Omega}^{2}\\&\leq Ch^{2k+2}\left|\frac{1}{\Delta t}\int_{t_{n-1}}^{t_{n}}\partial_{t} u(t){\rm d}t\right|_{k+1,\Omega}^{2}\\&\leq  C h^{2k+2}\left|\frac{1}{\Delta t}\left(\int_{t_{n-1}}^{t_{n}}{\rm d}t\right)^{\frac{1}{2}}\left(\int_{t_{n-1}}^{t_{n}}|\partial_{t}u(t)|^{2}{\rm d}t\right)^{\frac{1}{2}}\right|_{k+1,\Omega}^{2}\\&\leq C\frac{h^{2k+2}}{\Delta t}\int_{t_{n-1}}^{t_{n}}|\partial_{t}u(t)|_{k+1,\Omega}^{2}{\rm d}t.
	\end{align*} 
	For the second term, one further use of Taylor's Theorem and the Cauchy-Schwarz inequality gives
	\begin{align*}
		\parallel \delta u^{n}-\partial_{t} u^{n}\parallel_{0,\Omega}^{2}&= \int_{\Omega}(\delta u^{n}-\partial_{t} u^{n})^{2}{\rm d}\boldsymbol{x}\\&\leq \int_{\Omega}\left(\int_{t_{n-1}}^{t_{n}}|\partial_{tt} u(t)|{\rm d}t\right)^{2}{\rm d}\boldsymbol{x}\\&\leq \int_{\Omega}\int_{t_{n-1}}^{t_{n}}{\rm d}t\int_{t_{n-1}}^{t_{n}}|\partial_{tt}u(t)|^{2}{\rm d}t~{\rm d}\boldsymbol{x}\\& =\Delta t\int_{t_{n-1}}^{t_{n}}\parallel \partial_{tt}u(t)\parallel_{0,\Omega}^{2}{\rm d}t.
	\end{align*}

	Next,  using \eqref{lagranges}, we have
	\begin{align}
		\lvert  \eta^{n}_{h}\rvert_{1,\Omega}^{2}\leq Ch^{2k}|u^{n} |_{k+1,\Omega}^{2}\quad,\quad \|\eta^n_h\|_{0,\Omega}^2\le C\, h^{2k+2}|u^n|_{k+1,\Omega}^2.\label{inequal32}
	\end{align} 
	Furthermore, using the inverse inequality \eqref{inverse} and the approximation inequality \eqref{lagranges}, we have
	\begin{align} 
		J(\eta_{h}^{n},\eta_{h}^{n})= \gamma \sum_{F\in\calF_I^{}}\int_F\|\bbeta \|_{0,\infty,F}^{} h_{F}^{2}\llbracket\nabla \eta_{h}^{n} \rrbracket \cdot\llbracket\nabla \eta_{h}^{n} \rrbracket \mathrm{d}s\leq 
		C\gamma h^{2k+1}\|\bbeta\|_{0,\infty,\Omega}|u^{n} |_{k+1,\Omega}^{2}.\label{JJJ}
	\end{align}
	
	Gathering all the above bounds, we arrive at 
	\begin{align*}
		\parallel E^{m}_{h}\parallel_{0,\Omega}^{2} & +\Delta t\sum_{n=1}^{m}\bigg(\varepsilon\lvert  E^{n}_{h}\rvert_{1,\Omega}^{2} + 2\mu\parallel E^{n}_{h}\parallel_{0,\Omega}^{2} + J(E^{n}_{h},E^{n}_{h})\bigg)\\
		& \leq Ce^{2}\Bigg[h^{2k}\Bigg\{ \Delta t\sum_{n=0}^{m} \Bigg( \varepsilon  +T\|\bbeta\|_{0,\infty,\Omega}^{2} + h\gamma \|\bbeta\|_{0,\infty,\Omega}+ h^{2}T\mu^{2}\Bigg)|u^{n}|_{k+1,\Omega}^{2} \\
		& \quad+ Th^{2}\sum_{n=0}^{m}\int_{t_{n-1}}^{t_{n}}|\partial_{t} u(t)|_{k+1,\Omega}^{2}{\rm d}t \Bigg\} + T\Delta t^{2}\sum_{n=0}^{m}\int_{t_{n-1}}^{t_{n}}\parallel \partial_{tt}u(t)\parallel_{0,\Omega}^{2}{\rm d}t \Bigg].
	\end{align*}
	Finally, using the triangle inequality and (\ref{lagranges}) once again, we arrive at the final estimate
	\begin{align*}
		\max\limits_{1\leq m\leq N}&\parallel E^{m}\parallel_{0,\Omega}^{2} + \Delta t\sum_{n=1}^{N}\bigg(\varepsilon\lvert  E^{n}_{h}\rvert_{1,\Omega}^{2} + \mu\parallel E^{n}_{h}\parallel_{0,\Omega}^{2} + J(E^{n}_{h},E^{n}_{h})\bigg)\\
		& \leq Ce^{2}\Bigg[h^{2k}\Bigg\{\Delta t \sum_{n=0}^{N} \Bigg(\varepsilon + T\|\bbeta\|_{0,\infty,\Omega}^{2}  +h\gamma \|\bbeta\|_{0,\infty,\Omega}+ h^{2}T\mu^{2}\Bigg)|u^{n} |_{k+1,\Omega}^{2}\\
		& \quad + h^{2}\max\limits_{1\leq m\leq N}|u^{m} |_{k+1,\Omega}^{2} + Th^{2}\int_{0}^{T}|\partial_{t} u(t)|_{k+1,\Omega}^{2}{\rm d}t\Bigg\}  + T\Delta  t^2\int_{0}^{T}\parallel \partial_{tt} u(t)\parallel_{0,\Omega}^{2}{\rm d}t\Bigg],
	\end{align*}
	which proves the result.
\end{proof}

\section{Numerical experiments}	\label{sec:numerics}
In this section we present two experiments to test the numerical
performance of \eqref{eq199}. In these experiments we have used $\Omega=(0,1)^2$, and 
the value $\alpha=1$ in the stabilising bilinear form
$s(\cdot,\cdot)$. Except for the very last numerical result in
this section, we have used two types of meshes, a
three-directional triangular mesh and a regular quadrilateral one.
The coarsest level of each is depicted in Figure~\ref{meshs}.
 \begin{figure}[h!]
   \centering
   \subfloat[A symmetric, Delaunay mesh. ]{\label{e11}
     \includegraphics[width=0.23\textwidth]{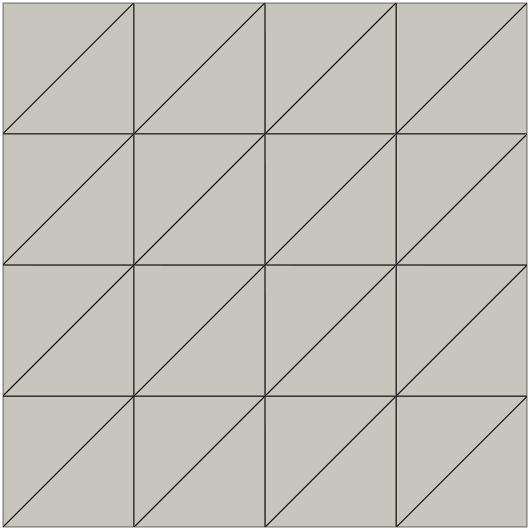}
   }	\hspace{20pt}
   \subfloat[A simple quadrilateral mesh.]{\label{Q11}
     \includegraphics[width=0.23\textwidth]{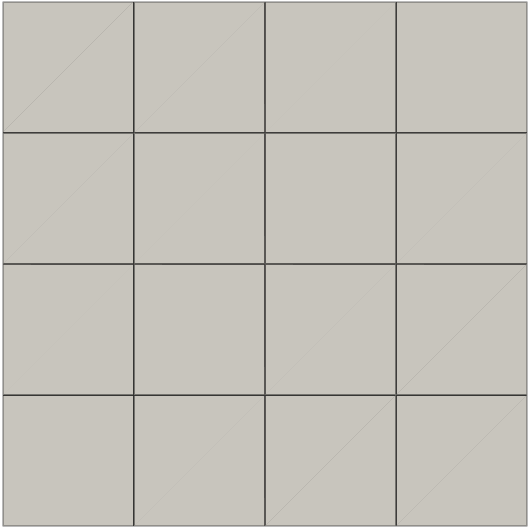}
   }\hspace{20pt}
\subfloat[A non-symmetric, non-Delaunay mesh. ]{\label{ND11}
	\includegraphics[width=0.23\textwidth]{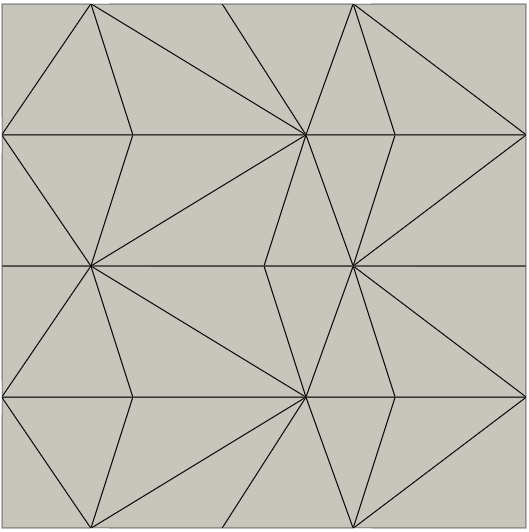}
}	
   \caption{Three coarse level indicative meshes used in the
     experiments all with $N=5$.}\label{meshs}
 \end{figure} 
 
To solve the nonlinear problem \eqref{eq199} at each discrete time
$t_{n}$, $n=1,2,\ldots,N$, first we set $\tilde{u}^{0}:=u_{h}^{n-1}$.
Next, by choosing an appropriate damping parameter $\omega\in(0,1]$
  the following fixed point Richardson-like iterative method is used
  to find $\tilde{u}^{m+1}\in V_{\mathcal{P}}$, such that
  \begin{align}
    &	\left(\tilde{u}^{m+1},v_{h}\right)_{\Omega}+ \Delta t\theta a_{J}\left(\tilde{u}^{m+1},v_{h}\right) = \left(\tilde{u}^{m},v_{h}\right)_{\Omega} + \Delta t\theta a_{J}\left(\tilde{u}^{m},v_{h}\right) \label{iter1} \\
    &\hspace{1.0cm} + \omega \left\{F^n(v_{h}) - \left[\left((\tilde{u}^{m})^{+},v_{h}\right)_{\Omega} + \Delta t\theta a_{J}\left((\tilde{u}^{m})^{+},v_{h}\right)+ s\left((\tilde{u}^{m})^{-},v_{h}\right)\right]\right\}  \quad\forall v_{h}\in V_{\mathcal{P}},\nonumber
  \end{align}
  for $m=1,2,\ldots,N_{\textrm{max}}^{}$, or until the following stopping criterion is achieved
\begin{align}
  \parallel \tilde{u}^{m+1}-\tilde{u}^{m} \|_{0,\Omega}\leq 10^{-8}. \label{stoping}
\end{align}
 Finally,  for $m_{0}$ which satisfies in \eqref{stoping},  we set $u_{h}^{n}=\tilde{u}^{m_{0}+1}$.
 

In all figures, $P-1$ indicates the number of divisions in the $x$ and
$y$ directions, resulting in a total of $P^2$ vertices, including the
boundary. We evaluate the method's asymptotic performance in the
$\parallel \cdot \parallel_{0,\Omega}$-norm at the final step, i.e.,
$\parallel e_{h}^{N}\parallel_{0,\Omega}$, and to verify the result
from Theorem \ref{Theorem11} we examine the asymptotic behaviour of
the error by the following norm
\begin{eqnarray} 
\parallel e^{N}\parallel_{h}^{2}:=\parallel e^{N}\parallel_{0,\Omega}^{2}+\sum_{n=0}^{N}\Delta t\left(\varepsilon\parallel \nabla e^{n}_{h}\parallel_{0,\Omega}^{2}+\mu\parallel  e^{n}_{h}\parallel_{0,\Omega}^{2}+J(e^{n}_{h},e^{n}_{h})\right). \label{norm22}
\end{eqnarray}
We have used $\mathbb{P}_1^{}$ and $\mathbb{P}_2^{}$ elements in the
triangular meshes, and $\mathbb{Q}_1^{}$ elements in the quadrilateral
mesh. In the numerical experiments we use the bound preserving Euler
(BP-Euler) \eqref{Euler} and the bound preserving Crank-Nicholson
(BP-CN), i.e., the method \eqref{eq199} with $\theta=\frac{1}{2}$,
even though stability and error estimates for BP-CN has not been
proven.  
\begin{example}[A problem with a smooth solution]\label{Example1}
  We consider $\mu=1$, $\varepsilon=10^{-6}$,
  $\boldsymbol{\beta}=(2,1)$, and set $f$ and $u^{0}$ such that the
  function
   \begin{align*}
  	u(x,y,t)=\exp(t)\sin(\pi x)\sin(\pi y)\, \quad \Omega=(0,1)^2,
  	\end{align*}
  	 is the
  analytical solution of (\ref{CDR}).  Notice that $u(x,y,t)\in
  [0,\exp(t)]$, and thus we set $\kappa(t)=\exp(t)$ as the upper bound
  at time $t$.  The CIP stabilisation parameter $\gamma=0.05$ has been
  used in \eqref{eq10} and we set $\omega=0.1$ for the damping
  parameter in all the time steps.
	
  Figure \ref{Fig121} illustrates the asymptotic behaviour of the
  error $\parallel e_{h}^{N}\parallel_{0,\Omega}$ using $\mathbb{P}_1$
  and $\mathbb{P}_2$ elements. These results align with the
  theoretical findings we established in Theorem \ref{Theorem11}. By
  fixing $\Delta t=4\times10^{-4}$ and decreasing the mesh size as
  depicted in Figures \ref{Eulerh1243} and \ref{Eulert}, we observe
  second- and third-order convergence when using $\mathbb{P}_1$
  and $\mathbb{P}_2$ elements, respectively, for both BP-Euler and
  BP-CN. Also, by fixing the mesh size $h=5\times10^{-3}$ and varying
  the length of the time-step $\Delta t$, as shown in Figures
  \ref{CNh12556} and \ref{BPMCN12}, we obtained first-order
  convergence for the Euler method and second-order convergence for
  Crank-Nicholson for both $\mathbb{P}_1$ and $\mathbb{P}_2$ elements,
  as expected.
  
  Figure \ref{Fig12132} depicts the asymptotic behaviour of the error
  $\parallel e^{N}\parallel_{h}^{2}$ using $\mathbb{P}_1$ and
  $\mathbb{P}_2$ elements. These results also corroborate the
  theoretical results we proved in Theorem \ref{Theorem11}. By fixing
  the time step and decreasing the mesh size as shown in Figures
  \ref{Eulerh12431} and \ref{Eulert1}, we observed second- and
  third-order convergence when using $\mathbb{P}_1$ and $\mathbb{P}_2$
  elements, respectively for the BP-Euler method.  This extra order of
  convergence is, most likely, due to the fact that the small value of
  $\varepsilon$ makes that the $\|\cdot\|_h$ norm is dominated by the
  $L^2(\Omega)$-norm.  Additionally, we achieved first-order
  convergence for the BP-Euler method for both $\mathbb{P}_1$ and
  $\mathbb{P}_2$ elements when the size of the time step is decreased.
  Figures \ref{CNh125561} and \ref{BPMCN121} show the asymptotic
  behaviour with respect to time for the BP-Euler method.
  
  To assess the computational cost of the nonlinear algorithm at each time step, we depicted in Figure \ref{Average1} the average number of iterations per step over 1000 time steps for a sequence of meshes with decreasing mesh size. The results indicate that there is no significant increase in the average number of iterations, which remains relatively low regardless of the mesh size.
\end{example}
\begin{figure}[H]
  \subfloat[$\Delta t=4\times10^{-4}$, T=0.2, $\mathbb{P}_1^{}$ elements.]{\label{Eulerh1243}
    \includegraphics[width=0.53\textwidth]{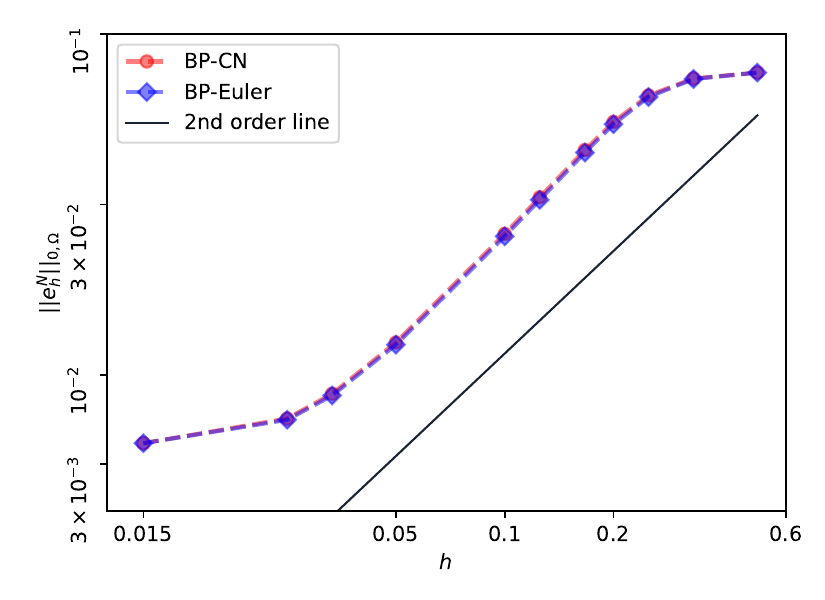}
  }
  \subfloat[$h=5\times10^{-3}$, $T=1$, $\mathbb{P}_1^{}$ elements.]{\label{CNh12556}
    \includegraphics[width=0.53\textwidth]{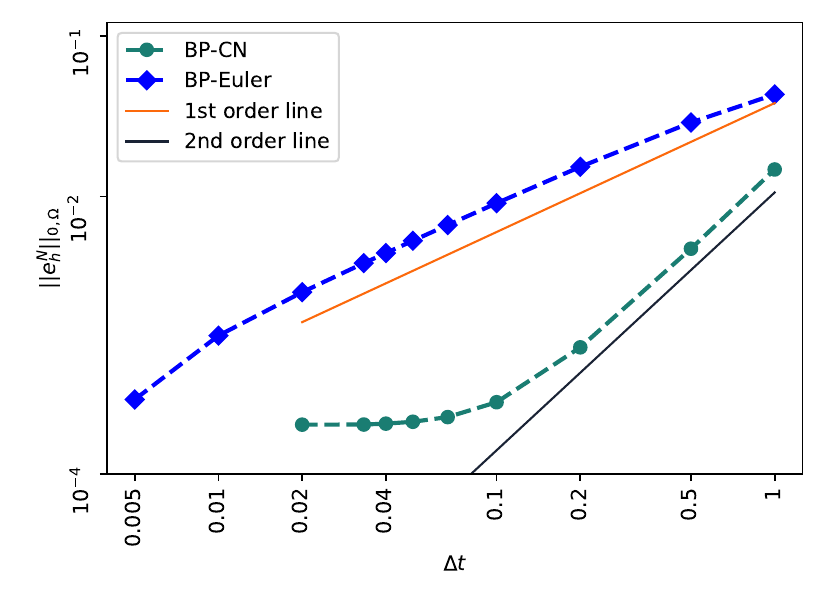}
  }\\
  \subfloat[$\Delta t=4\times10^{-4}$, T=0.2, $\mathbb{P}_2^{}$ elements.]{\label{Eulert}
    \includegraphics[width=0.53\textwidth]{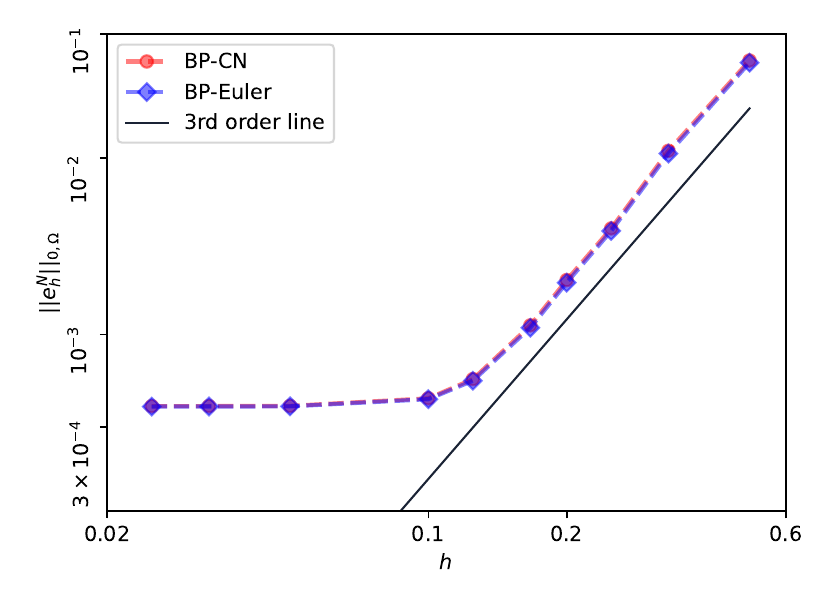}
  }
  \subfloat[$h=5\times10^{-3}$, $T=1$, $\mathbb{P}_2^{}$ elements]{\label{BPMCN12}
    \includegraphics[width=0.53\textwidth]{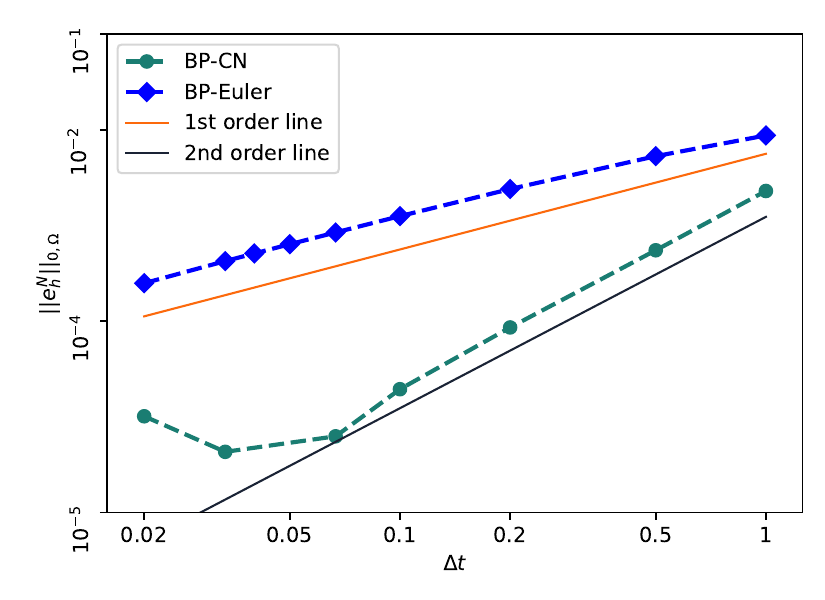}
  }
  \caption{ Comparison of the error of the approximated solution by  the BP-Euler method and BP-CN method with the exact solution in $\parallel \cdot\parallel_{0,\Omega}$-norm (using mesh
  	\ref{e11}). }\label{Fig121}
\end{figure}  
\begin{figure}[H]
  \subfloat[$\Delta t=4\times10^{-4}$, T=0.2, $\mathbb{P}_1^{}$ elements.]{\label{Eulerh12431}
    \includegraphics[width=0.53\textwidth]{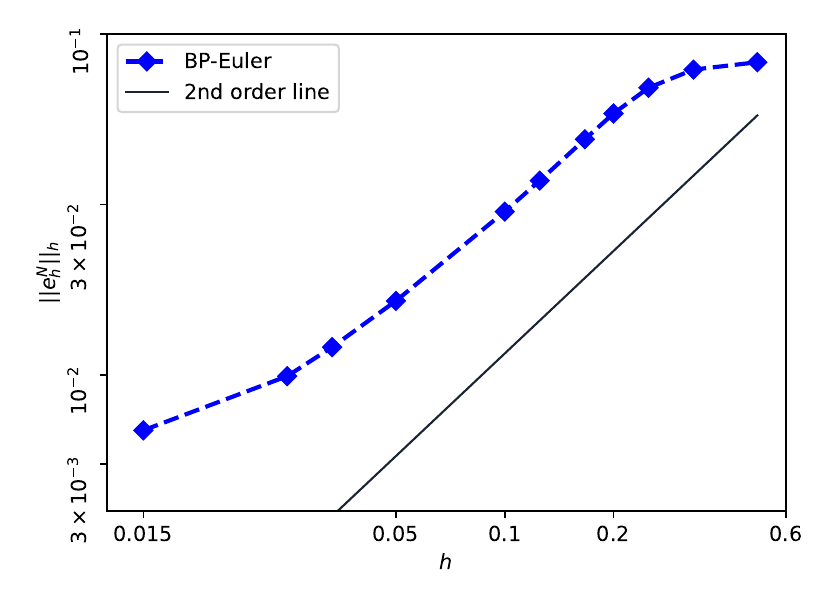}
  }
  \subfloat[$h=5\times10^{-3}$, $T=1$, $\mathbb{P}_1^{}$ elements.]{\label{CNh125561}
    \includegraphics[width=0.53\textwidth]{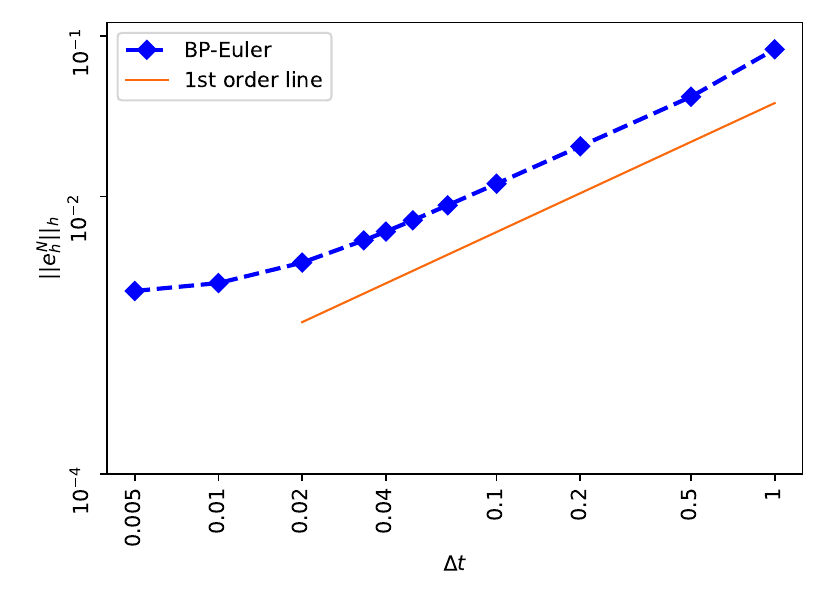}
  }\\
  \subfloat[$\Delta t=4\times10^{-4}$, T=0.2, $\mathbb{P}_2^{}$ elements.]{\label{Eulert1}
    \includegraphics[width=0.53\textwidth]{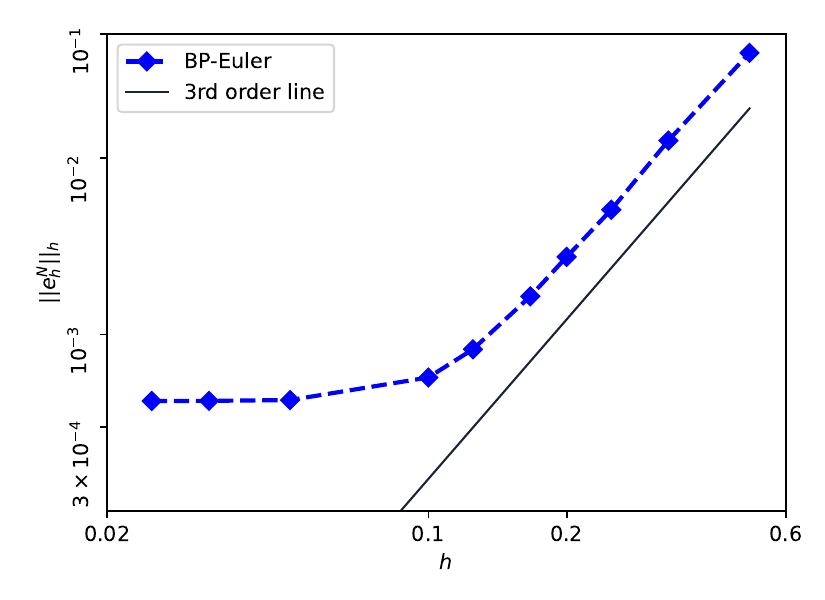}
  }
  \subfloat[$h=5\times10^{-3}$, $T=1$, $\mathbb{P}_2^{}$ elements]{\label{BPMCN121}
    \includegraphics[width=0.53\textwidth]{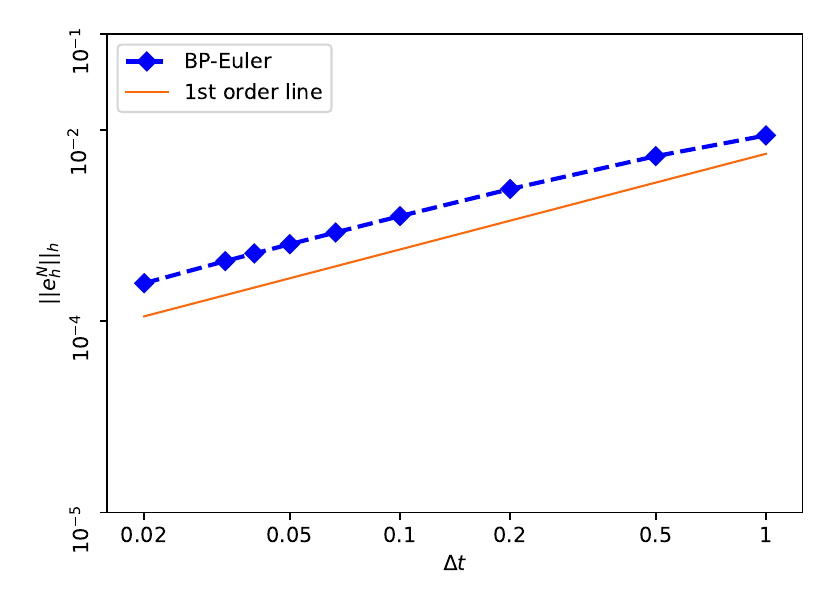}
  }
  \caption{ Using norm (\ref{norm22}) for the comparison of the error of the approximated solution by the BP-Euler method with the exact solution (using mesh
  	\ref{e11}).}\label{Fig12132}
\end{figure}  
\begin{figure}[H]
	\subfloat[$\mathbb{P}_1^{}$ elements]{
		\includegraphics[width=0.53\textwidth]{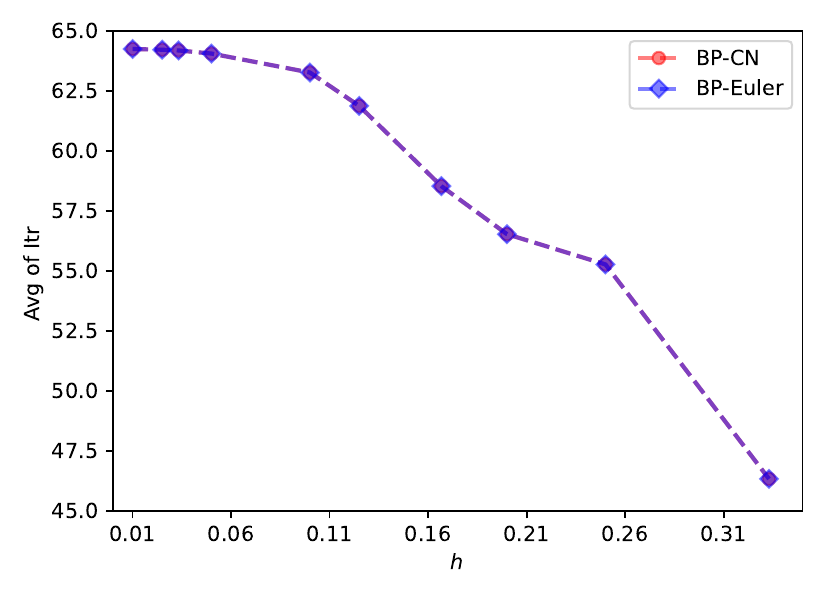}
	}
	\subfloat[ $\mathbb{Q}_1^{}$ elements]{
		\includegraphics[width=0.53\textwidth]{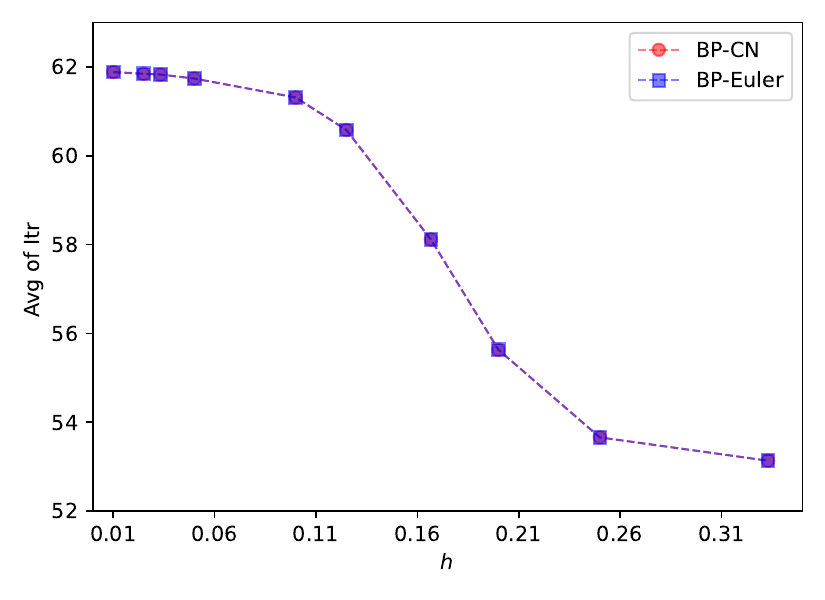}
	}
	\caption{The average number of the Richardson iterations of 1000 time steps ($T=1$) needed to reach convergence using $\mathbb{P}_{1}$ and $\mathbb{Q}_{1}$ elements and BP-Euler and BP-CN  methods and the meshes
		\ref{e11} and \ref{Q11}.}\label{Average1}
\end{figure}  
\begin{example}[Three body rotation]\label{Example2}
  This example is a modified version of the three body rotation
  transport problem from \cite{MR1388492}.  We used
  $\boldsymbol{\beta}=(0.5-y,x-0.5)$, $\varepsilon=10^{-12}$ and
  $\mu=f=0$. The initial setup involves three separate bodies, as
  depicted in Figure \ref{Initial123}. Each body's position is defined
  by its centre at coordinates $(x_0, y_0)$. Every body is contained
  within a circle of radius $r_0 = 0.15$ cantered at \((x_0,
  y_0)\). Outside these three bodies, the initial condition is zero.

Let  
\begin{eqnarray}
	r(x,y)=\frac{1}{r_{0}}\sqrt{(x-x_{0})^{2}+(y-y_{0})^{2}}.
\end{eqnarray}
The center of the slotted cylinder is in $(x_{0},y_{0})=(0.5,0.75)$ and its geometry is given by
\begin{align}\label{eq2}
	u(0;x,y)=\left\{ \begin{array}{ll}1 \hspace{0.3cm}{\rm if}\hspace{0.3cm}r(x,y)\leq 1, |x-x_{0} |\geq 0.0225 \hspace{0.3cm} {\rm or} y\geq 0.85;\\  0 \hspace{0.2cm} {\rm else},\end{array} \right.
\end{align}
The conical body at the bottom side is described by $(x_{0},y_{0})=(0.5,0.25)$ and 
\begin{eqnarray}
	u(0;x,y)=1-r(x,y).
\end{eqnarray}
Finally, the hump at the left hand side is given by $(x_{0},y_{0})=(0.25,0.5)$ and 
\begin{eqnarray}
	u(0;x,y)=\frac{1}{4}(1+\cos(\pi \min\{r(x,y),1\})).
\end{eqnarray}
The rotation of the bodies occurs counter-clockwise. A full revolution takes $t=2\pi$. We use $P=130$ so a regular grid consisting of $130\times 130$ mesh cells for $\mathbb{P}_1^{}$, $\mathbb{P}_2^{}$, and $\mathbb{Q}_1^{}$ elements. 
\end{example}
\begin{figure}[h!]
	\centering
	\includegraphics[width=0.6\textwidth]{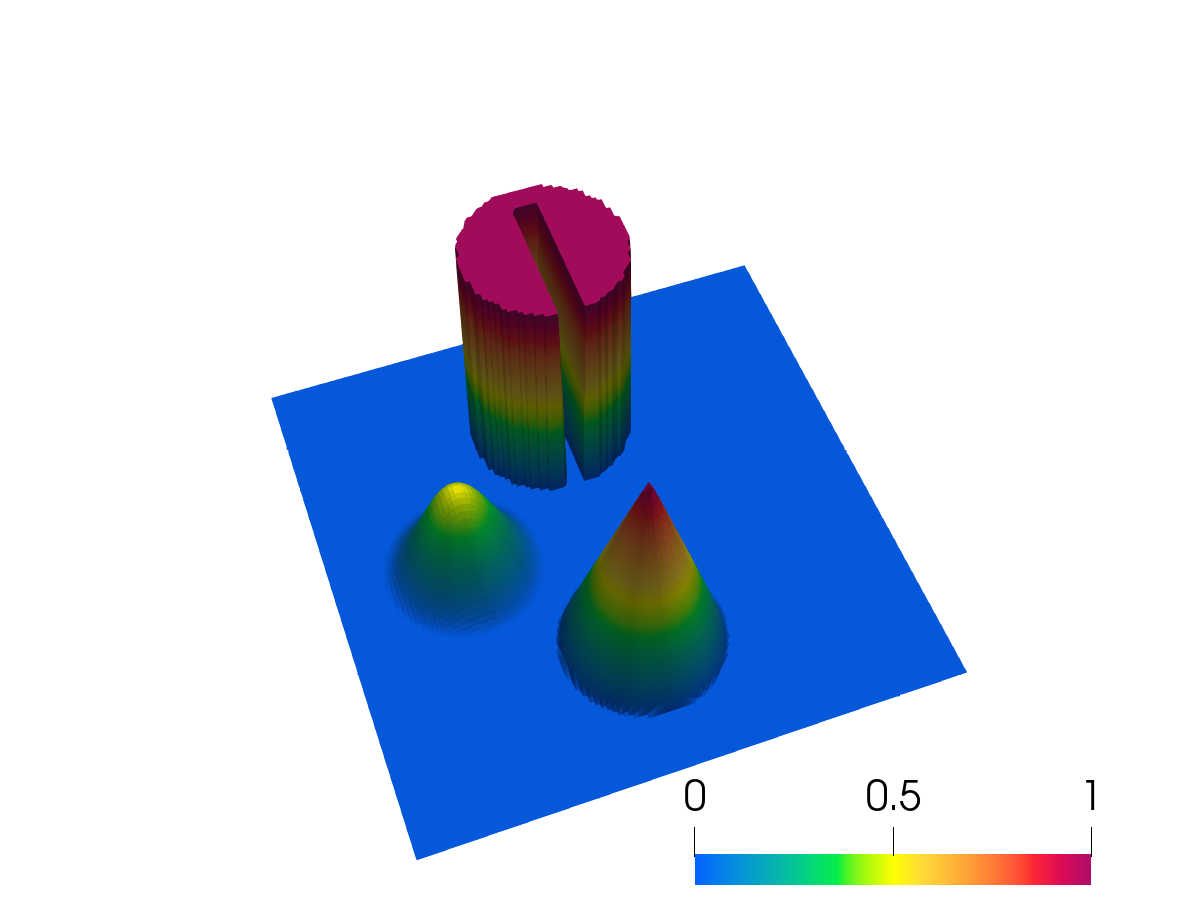}
	\caption{Initial data $u^{0}$ for rotating body problem.}\label{Initial123}
\end{figure}
 \begin{figure}[h!]
 	\centering
 	\subfloat[BP-Euler-$\mathbb{P}_1^{}$ elements]{
 		\includegraphics[width=0.53\textwidth]{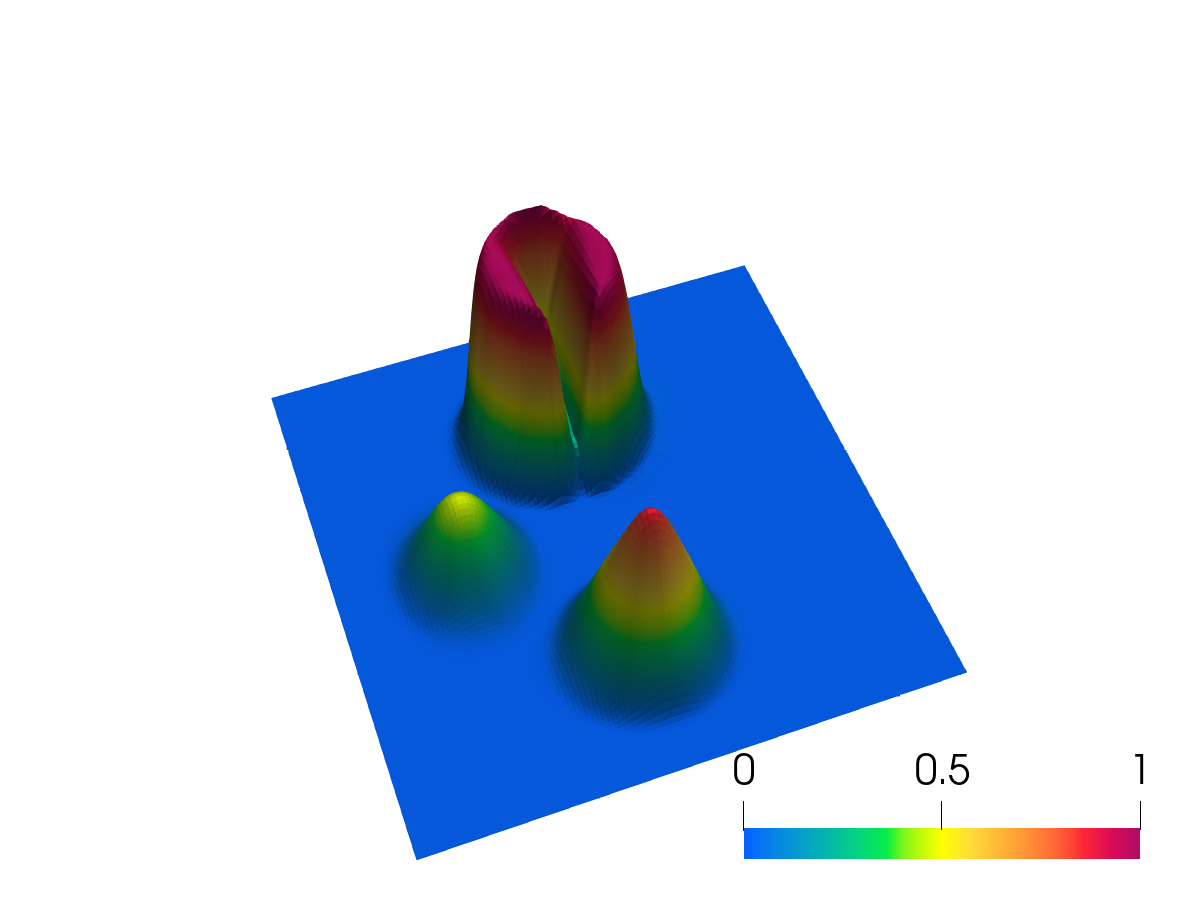}
 	}
 	\subfloat[BP-CN,$\mathbb{P}_1^{}$ elements]{
 		\includegraphics[width=0.53\textwidth]{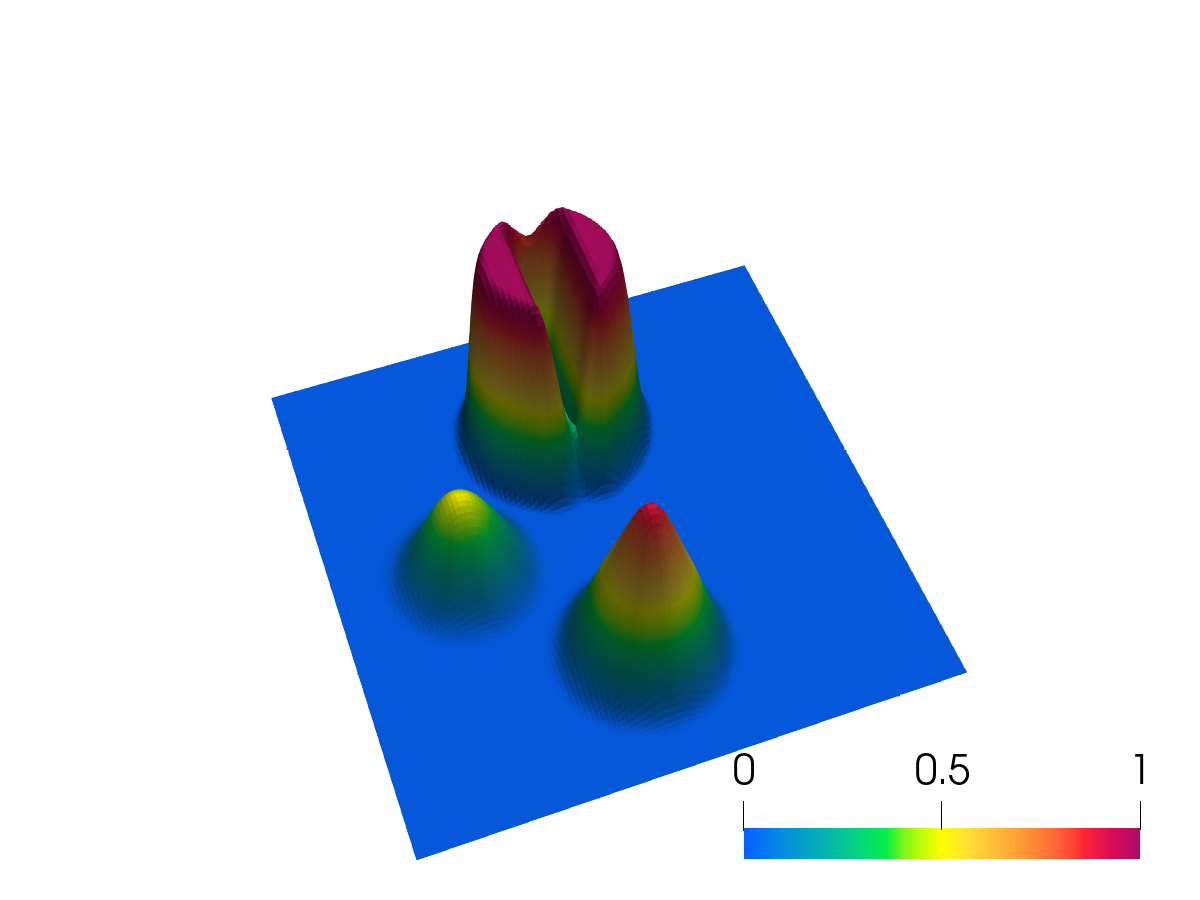}
 	}\\
 	\subfloat[BP-Euler-$\mathbb{P}_2^{}$ elements]{
 		\includegraphics[width=0.53\textwidth]{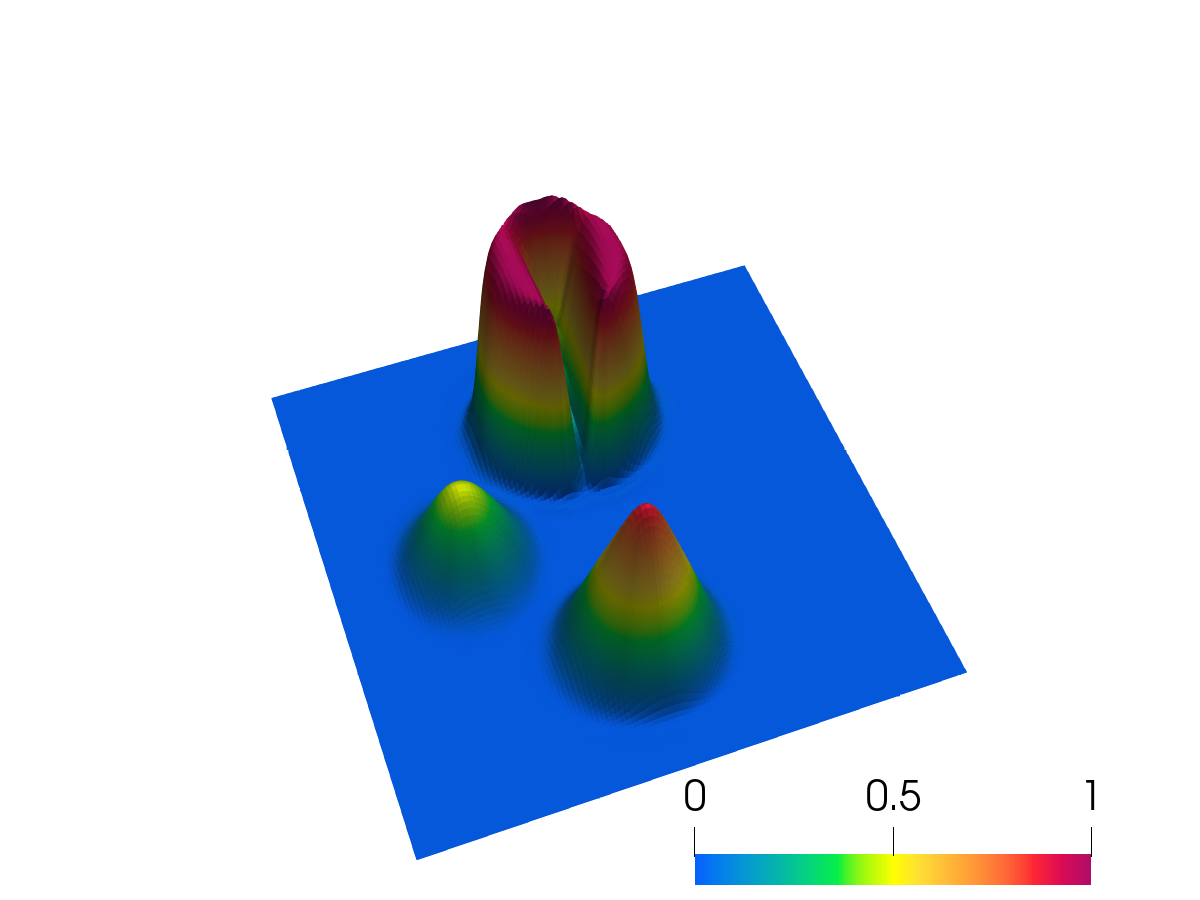}
 	}
 	\subfloat[BP-CN-$\mathbb{P}_2^{}$ elements]{
 		\includegraphics[width=0.53\textwidth]{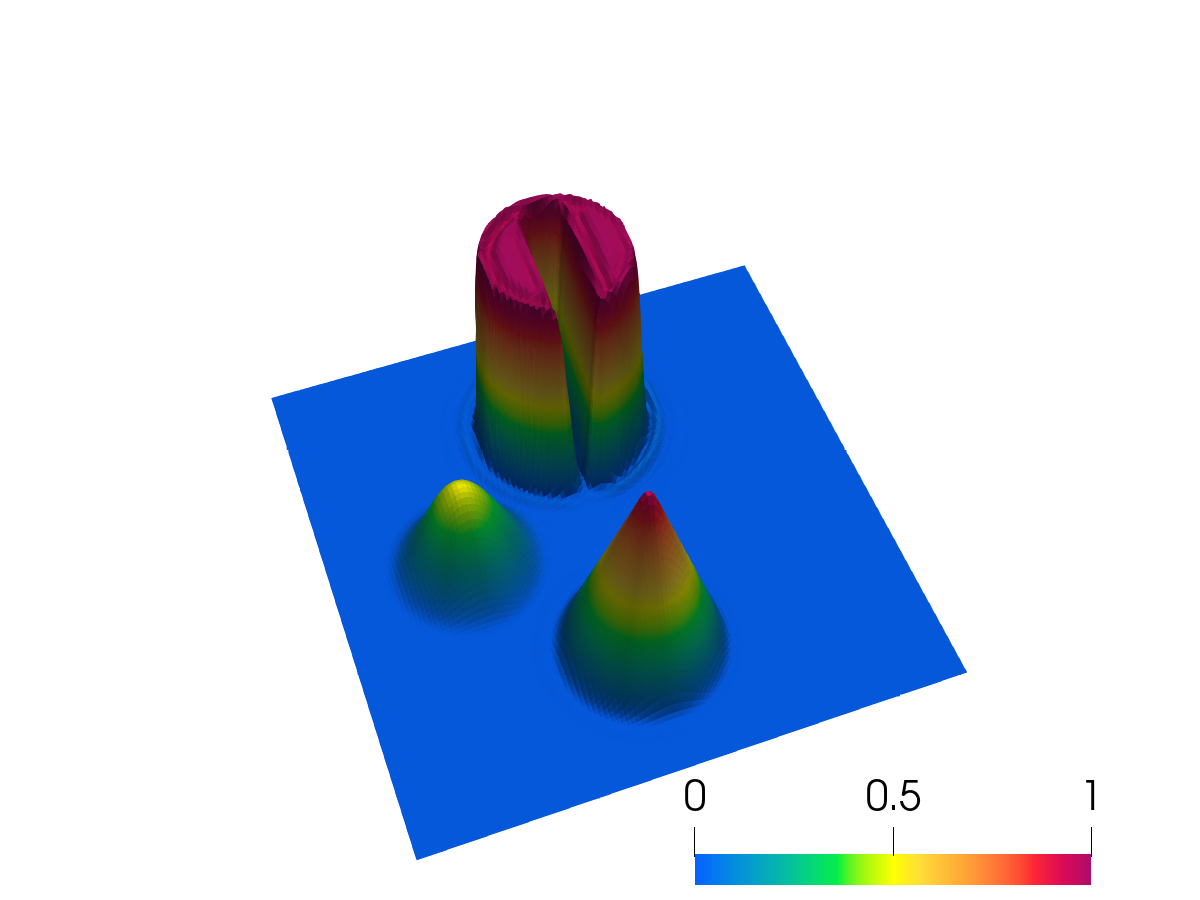}
 	}\\
 	\subfloat[BP-Euler-$\mathbb{Q}_1^{}$ elements]{
 		\includegraphics[width=0.53\textwidth]{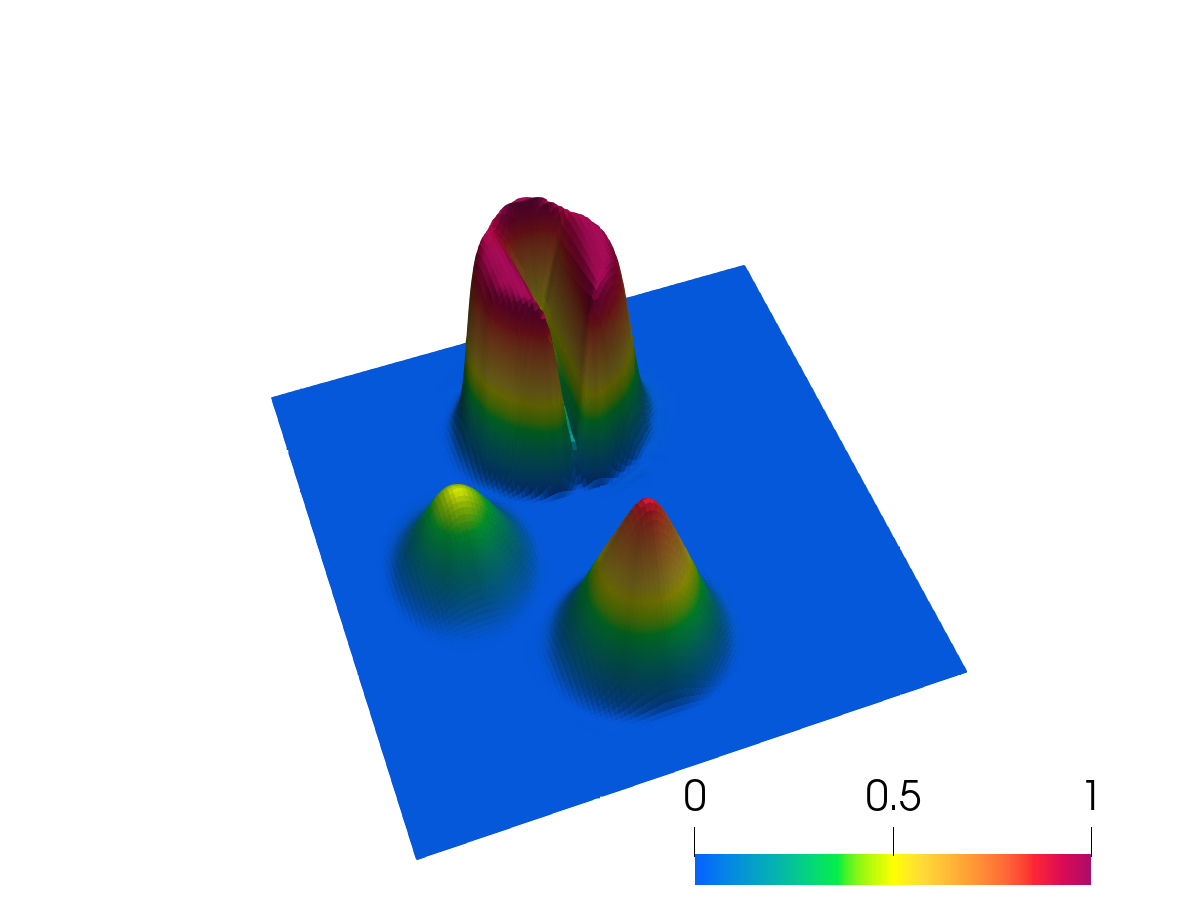}
 	}
 	\subfloat[BP-CN-$\mathbb{Q}_1^{}$ elements]{
 		\includegraphics[width=0.53\textwidth]{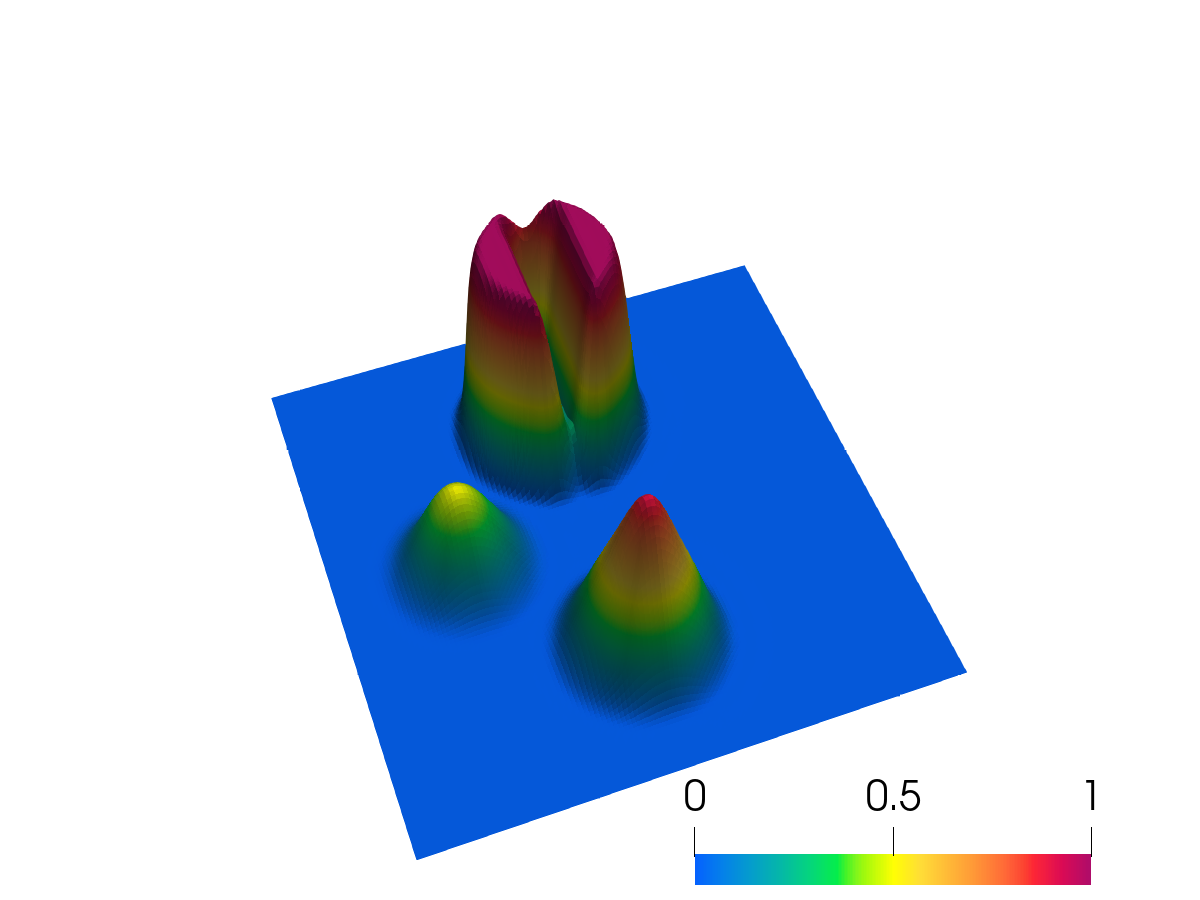}
 	}
 	\caption{  The approximation of the solution of Example 2 for BP-Euler method and BP-CN method at $T=6.28$ ($\gamma=0.001$, $P=130$).  }\label{Fig143}
 \end{figure} 
The simulations were performed with the final time $T=2\pi$ and the time step $\Delta t=10^{-3}$. Figure \ref{Fig143} depicts the approximation solution for the BP-Euler method and BP-CN method using $\mathbb{P}_1^{}$, $\mathbb{P}_2^{}$ and $\mathbb{Q}_1^{}$ elements. In both methods the CIP term (\ref{eq10})  was used with the parameter $\gamma=0.001$. 
Our numerical experiments show that the optimal value of $\omega$ (relative to the number of iterations needed to reach convergence) is approximately  $0.07$ 
when using the quadrilateral mesh, while for $\mathbb{P}_1$ and $\mathbb{P}_2$ elements, it is around $0.12$. So, we report the results using these values.

For comparison purposes, we also approximated the same problem with CIP-Euler and  CIP-Crank-Nicolson (CIP-CN) (the method that only use the CIP term i.e., the full time-space discretisation $\theta$ scheme of the method \eqref{eq824}) with the same value for the parameter $\gamma$. 
To compare the numerical solution of different methods, a cross section along the line $y=0.75$ was taken of $u^{0}$, BP-Euler, BP-CN, CIP-Euler and CIP-CN methods. The results have been shown in Figure \ref{Cross123}. Since we perform a full rotation, we can compare the solution from each method with the initial condition to assess the diffusive properties of each method. As shown in Figure \ref{Cross123}, among all the methods, BP-CN demonstrates the best performance regardless of the type of elements used. 

Two final experiments are presented. The first aims at  assessing the effect of adding CIP stabilization to the method \eqref{eq199}. For this, we set $\gamma=0$ in $J(\cdot,\cdot)$
 and the results are shown in Figure \ref{FigurenonCIP} for the BP-CN method using $\mathbb{P}_1^{}$
 elements. In Figure \ref{FigurenonCIP} we can observe the solution 
$u_{h}^{+}$, while respecting the bounds of the exact solution, exhibits spurious oscillations near the layers. This justifies the need for CIP stabilization in \eqref{eq199}.

To test the performance of the method in the case when the mesh used is not Delaunay, we have approximated this example also in the non-Delaunay mesh
depicted in Figure~\ref{ND11}, using $P=130$. The same cross-sections of the approximate solutions for the BP-Euler and BP-CN methods are depicted in Figure~\ref{Fig:Ex2-NonDel}, alongside the 
cross-sections for the CIP stabilised finite element method. In both cases $\gamma=0.001$ has been used in the simulation.  From the results we can observe, once again, that $u_h^+$
respects the bounds of Assumption~(A1), while the CIP solution presents noticeable over and undershoots.

Finally, to study mass conservation we use the relative mass, i.e. the ratio of the mass at time $t$ to the initial mass defined by
\[
M_{r}(t)=\frac{M(t)}{M(0)},
\]
where $M(t)$ is the total mass at time \( t \), and is defined as
\[
M(t) = \int_{\Omega} u(\boldsymbol{x},t) \, \mathrm{d}\boldsymbol{x}.
\]

The evolution of mass over time for BP-Euler and BP-CN methods is presented in Figure \ref{Fig:MASS}. 
The plot depicts the evolution of the relative mass. In there, we observe that, despite the fact that the scheme does not preserve mass, the mass loss/gain remains low throughout the simulation. 

\begin{figure}[h!]
	\centering
	\subfloat[Euler, $\mathbb{P}_1^{}$ elements]{
		\includegraphics[width=0.46\textwidth]{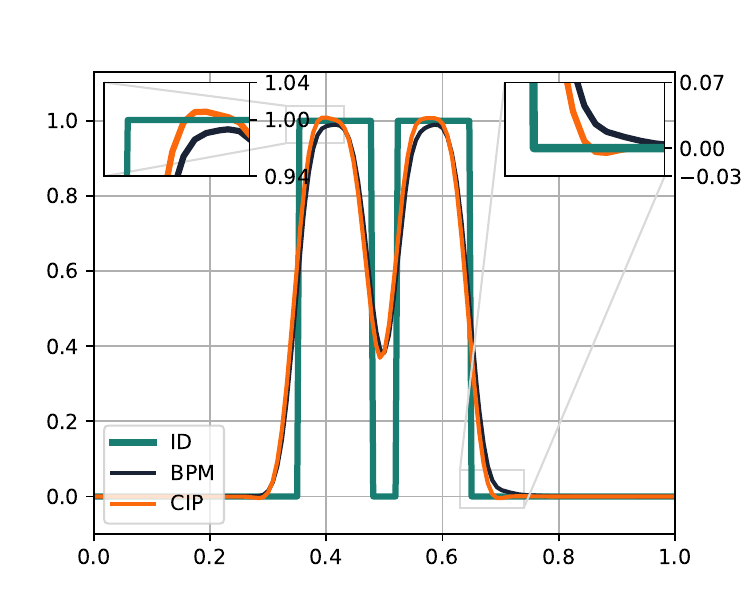}
	}
		\subfloat[CN, $\mathbb{P}_1^{}$ elements]{
			\includegraphics[width=0.46\textwidth]{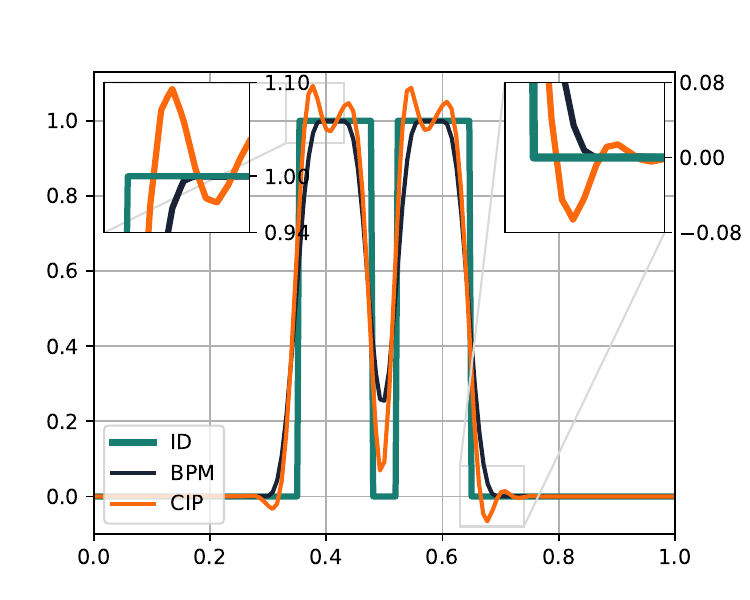}
		}\\
	\subfloat[Euler, $\mathbb{P}_2^{}$ elements]{
		\includegraphics[width=0.46\textwidth]{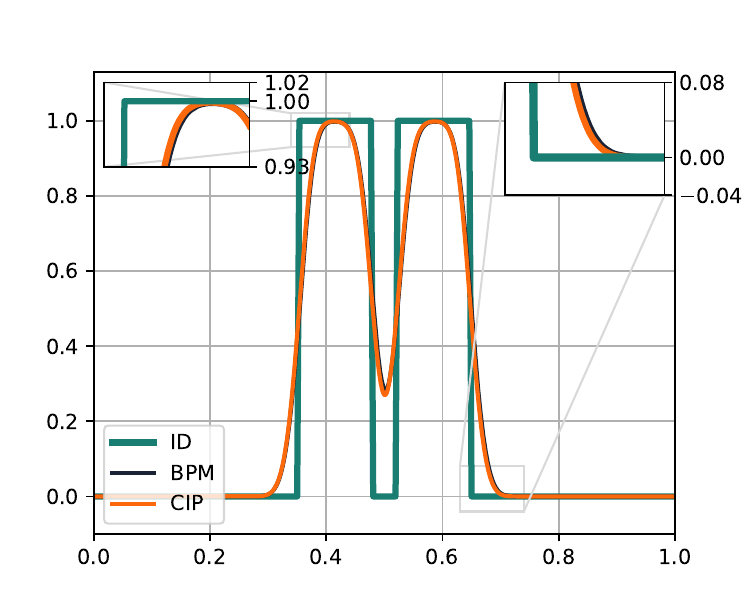}
	}
	\subfloat[CN, $\mathbb{P}_2^{}$ elements]{
		\includegraphics[width=0.46\textwidth]{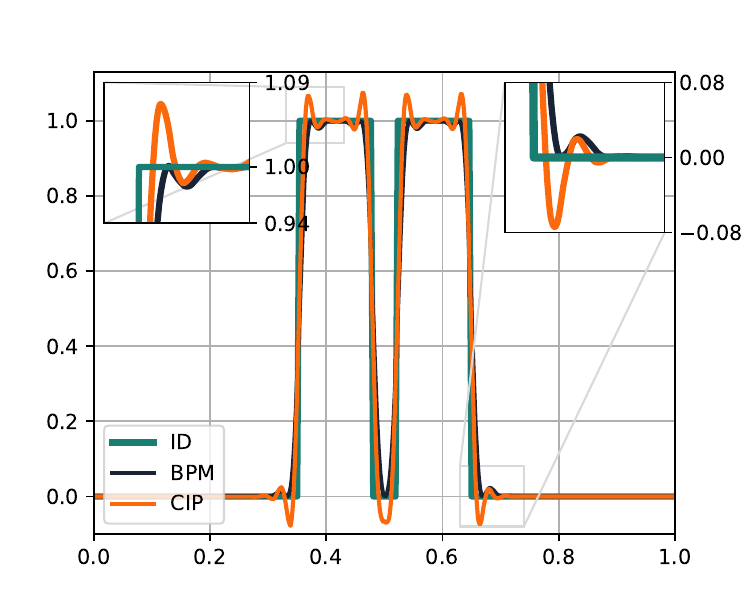}
	}
	\\
	\subfloat[Euler, $\mathbb{Q}_1^{}$ elements]{
		\includegraphics[width=0.46\textwidth]{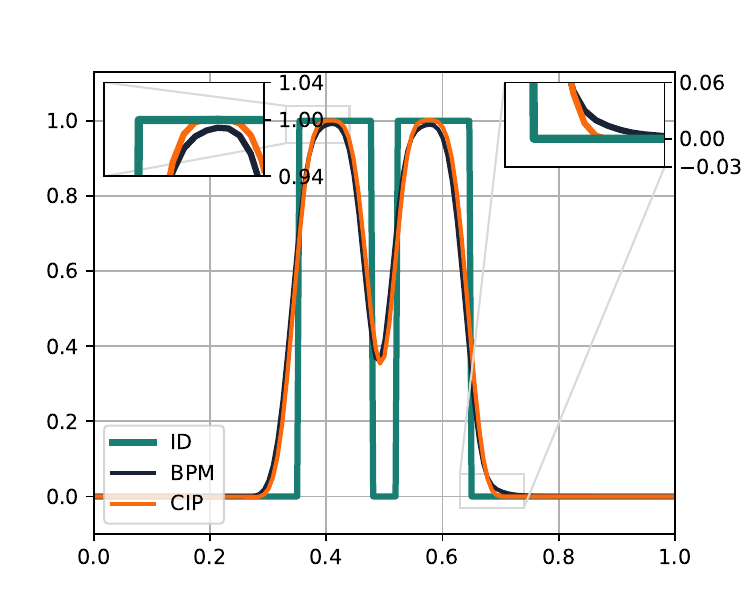}
	}
	\subfloat[CN, $\mathbb{Q}_1^{}$ elements]{
		\includegraphics[width=0.46\textwidth]{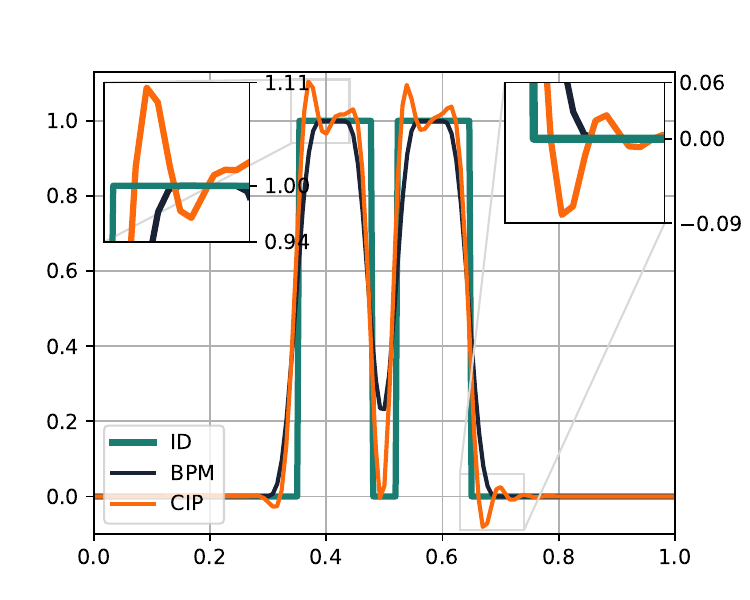}
	}
	\caption{  Cross sections were taken along the line $y=0.75$ of Initial data $u^{0}$, BP-Euler, BP-CN, CIP-Euler and CIP-CN methods at $T=6.28$ ($\gamma=0.001$, $P=130$). For plotting these cross-sections, when $\mathbb{P}_2^{}$ elements are used 10,000 equidistant points were chosen along the line $y=0.75$, and the values of the approximated solution have been plotted at these points.}\label{Cross123}
\end{figure}  
\begin{figure}[h!]
	\centering
	\subfloat[Euler, $\mathbb{P}_1^{}$ elements]{
		\includegraphics[width=0.5\textwidth]{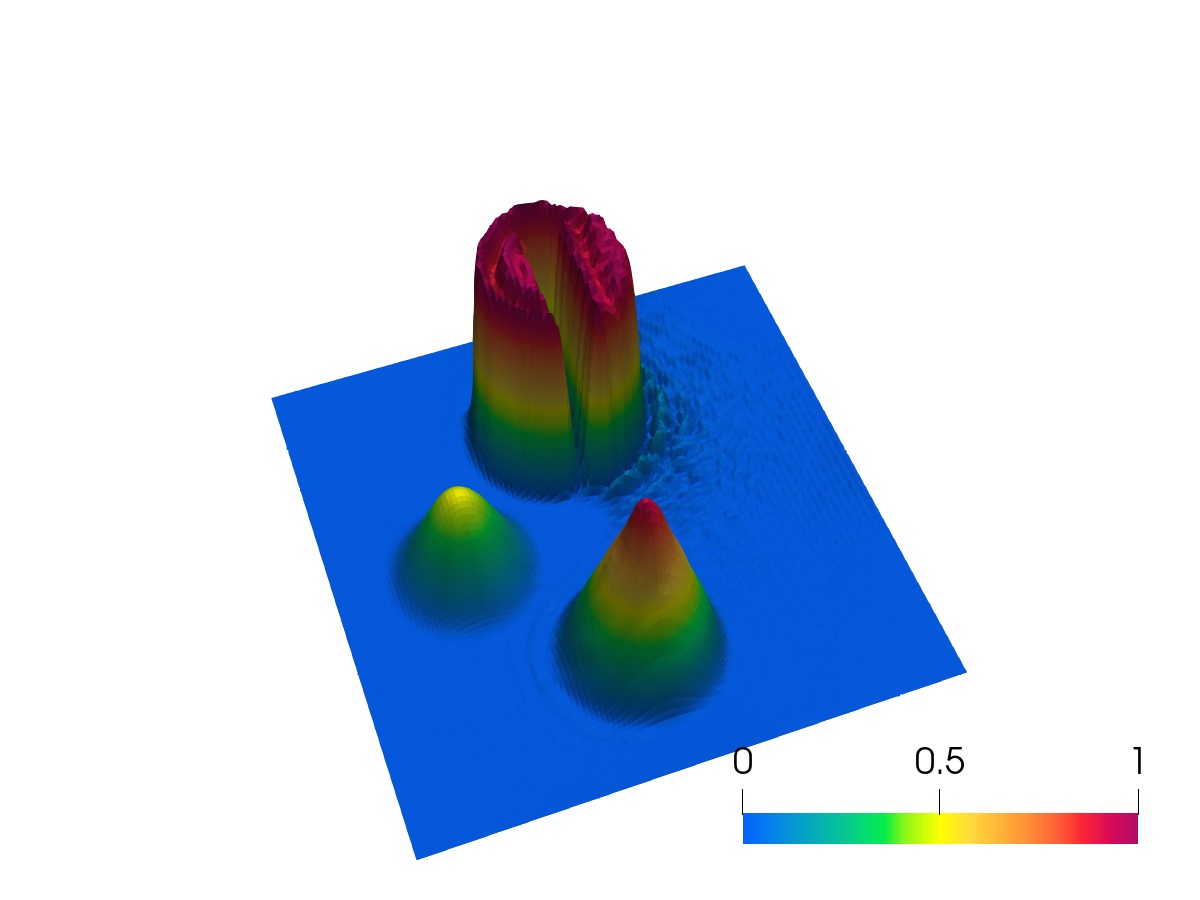}
	}
	\subfloat[CN, $\mathbb{P}_1^{}$ elements]{
		\includegraphics[width=0.5\textwidth]{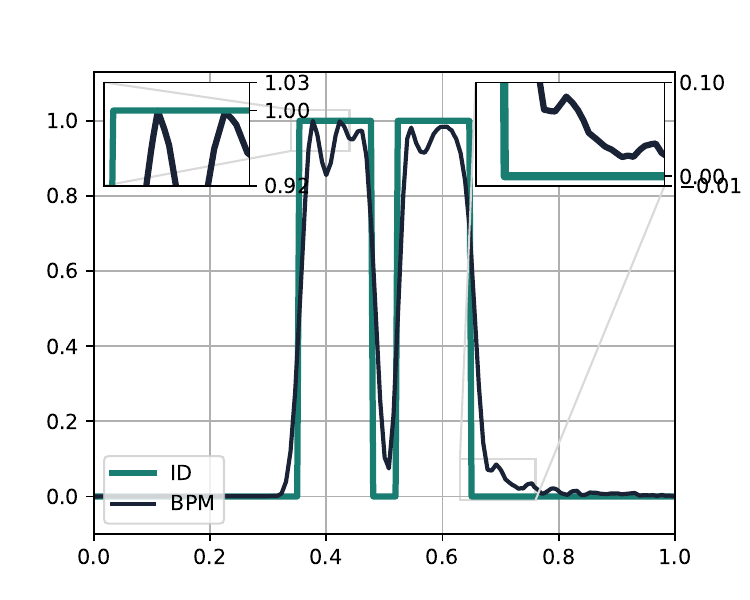}
	}
	\caption{ {\bf Left:} The approximation of the solution of Example 2 for BP-CN method without CIP term ($\gamma=0$) using $\mathbb{P}_1^{}$ elements and mesh \ref{e11}  ($P=130$) {\bf Right:} Cross sections were taken along the line $y=0.75$ of Initial data $u^{0}$ and BP-CN method without CIP term at $T=6.28$. }\label{FigurenonCIP}
\end{figure}  

\begin{figure}[h!]\label{Fig:Ex2-NonDel}
	\centering
	\subfloat[Euler, $\mathbb{P}_1^{}$ elements]{
		\includegraphics[width=0.46\textwidth]{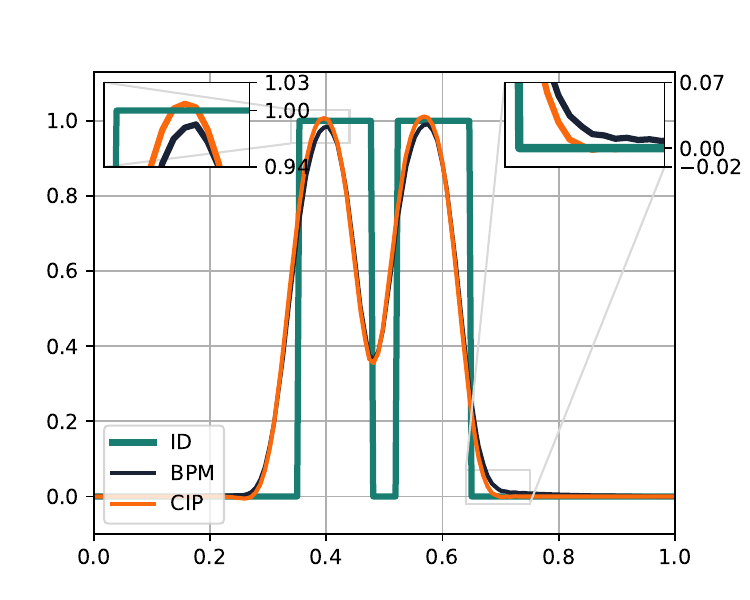}
	}
	\subfloat[CN, $\mathbb{P}_1^{}$ elements]{
		\includegraphics[width=0.46\textwidth]{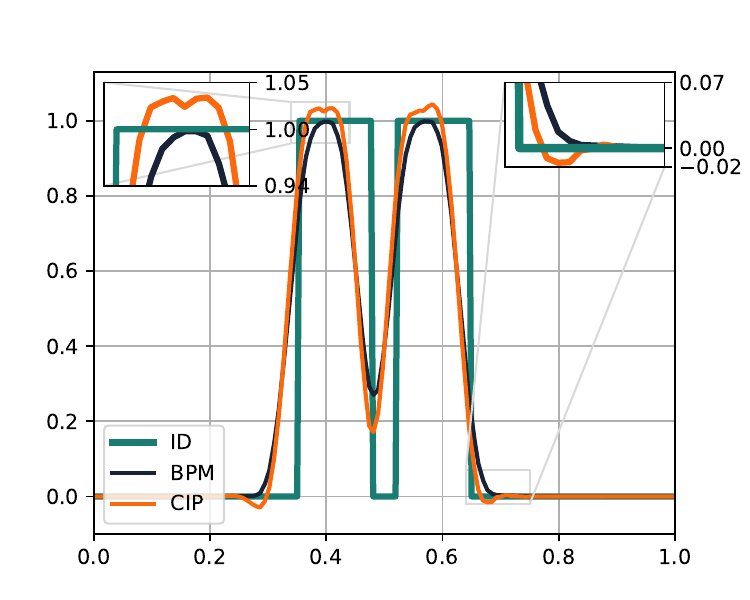}
	}
	\caption{  Cross sections were taken along the line $y=0.75$ of Initial data $u^{0}$, BP-Euler, BP-CN, CIP-Euler and CIP-CN methods at $T=6.28$ ($\gamma=0.001$, $P=130$) on the non-Delaunay mesh \ref{ND11}. }
\end{figure}  
\begin{figure}[h!]\label{Fig:MASS}
	\centering
	\subfloat[Euler]{
		\includegraphics[width=0.46\textwidth]{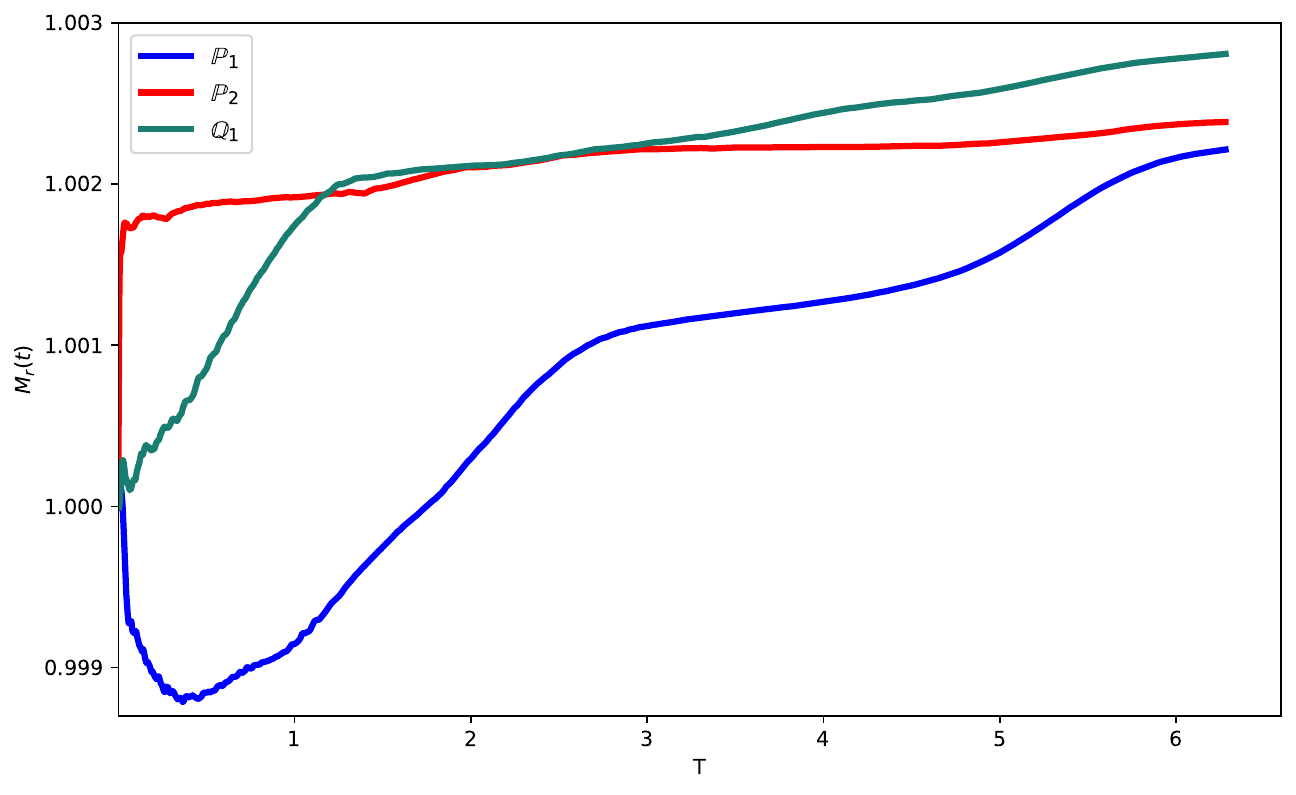}
	}
	\subfloat[CN]{
		\includegraphics[width=0.46\textwidth]{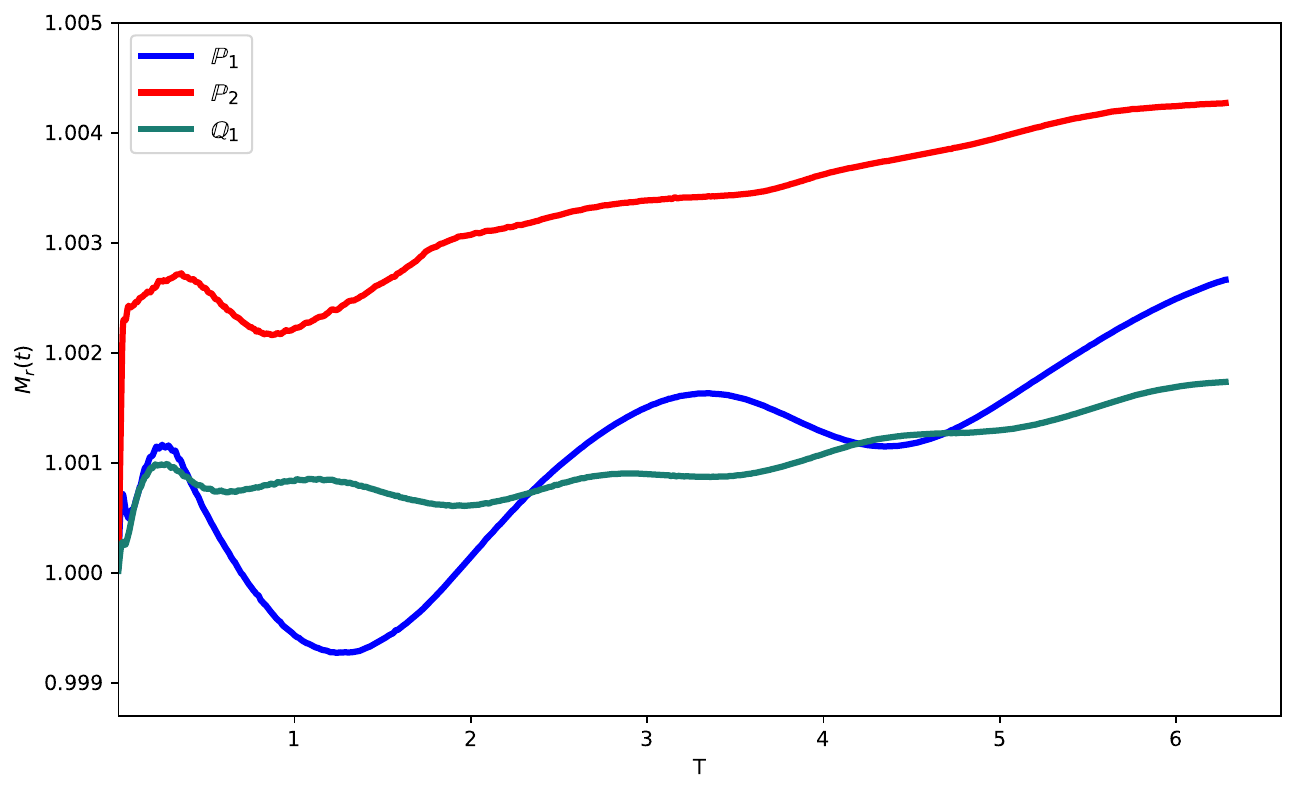}
	}
	\caption{  The evolution of mass over time employing the BP-Euler and BP-CN schemes. These methods have been implemented with $\mathbb{P}_1^{}$ and $\mathbb{P}_2^{}$ elements on Mesh~\ref{e11}, and $\mathbb{Q}_1^{}$ elements on Mesh~\ref{Q11}.
	 }
\end{figure}  

\section{Conclusions and outlook} 	
\label{sec:conc}
In this work we have build-up on the works \cite{BGPV23,ABT} and extended that framework to the time-dependent convection-diffusion equation.  The analysis of stability and error estimates
has been restricted to the implicit Euler scheme, but the method itself has also been tested in the context of the Crank-Nicolson time discretisation. For both options, the numerical experiments
show a good performance, with the solution respecting the physical bounds, while at the same time the layers in the solution do not seem to be excessively smeared.

It is important to mention that more economical alternatives, such as linearised flux-corrected transport (FCT) methods (see, e.g., \cite{MR2501695}), are also available to ensure bound preservation. Nevertheless, several factors should be considered. One aspect is the CFL condition, as most linear flux-corrected transport methods require such a condition to guarantee bound preservation, whereas our approach does not impose this restriction.  Another consideration is the applicability to higher-order elements. FCT methods have primarily been developed for linear finite elements, and bound preservation is not guaranteed for higher-order elements, as the necessary analysis has not yet been carried out. Additionally, improvements to the nonlinear solver itself can enhance performance. In this paper, we used a simple Richardson type solver to highlight the simplicity of the scheme, but more efficient nonlinear solvers, such as localised Newton methods (see, e.g.,  \cite{argyros1988newton}), or active set methods \cite{AAP25}, can significantly improve convergence speed. Our preliminary results indicate that these alternatives provide much faster convergence.
%
%
%
%

Several problems
remain open at this point.  The extension of the stability and error analysis to higher-order time discretisation is, at the moment, an open problem. In addition,  the extension of this framework
to the transport equation is also of interest. A parallel development is the extension of this methodology to discontinuous Galerkin scheme in space, which is the topic of the companion paper \cite{barrenechea2024nodally}.  These, and other topics will be the subject of future research.

\paragraph{Acknowledgements}
The work of AA, GRB, and TP has been partially supported by the
Leverhulme Trust Research Project Grant No. RPG-2021-238.  TP is also
partially supported by EPRSC grants
\href{https://gow.epsrc.ukri.org/NGBOViewGrant.aspx?GrantRef=EP/W026899/2}
{EP/W026899/2},
\href{https://gow.epsrc.ukri.org/NGBOViewGrant.aspx?GrantRef=EP/X017206/1}
{EP/X017206/1}
and
\href{https://gow.epsrc.ukri.org/NGBOViewGrant.aspx?GrantRef=EP/X030067/1}
{EP/X030067/1}. 


\bibliographystyle{elsarticle-num}
\bibliography{refs.bib}

%
%

\end{document}